\documentclass[preprint,11pt]{elsarticle}




\usepackage{amssymb}
\usepackage{amsthm}
\usepackage{tikz}
\usepackage{tikz,pgfplots}
\usetikzlibrary{decorations.markings}

\usepackage[margin=2.5cm]{geometry}




\journal{Discrete Applied Mathematics}

\begin{document}

\newtheorem{definition}{Definition}
\newtheorem{theorem}{Theorem}
\newtheorem{corollary}{Corollary}
\newtheorem{lemma}{Lemma}
\newtheorem{conjecture}{Conjecture}

\tikzset{middlearrow/.style={
        decoration={markings,
            mark= at position 0.7 with {\arrow[scale=2]{#1}} ,
        },
        postaction={decorate}
    }
}

\begin{frontmatter}


\title{On diregular digraphs with degree two and excess two}



\author{James Tuite}

\address{Department of Mathematics and Statistics, Open University, Walton Hall, Milton Keynes}
\ead{james.tuite@open.ac.uk}

\begin{abstract}
An important topic in the design of efficient networks is the construction of $(d,k,+\epsilon )$-digraphs, i.e. $k$-geodetic digraphs with minimum out-degree $\geq d$ and order $M(d,k)+ \epsilon $, where $M(d,k)$ represents the Moore bound for degree $d$ and diameter $k$ and $\epsilon > 0$ is the (small) excess of the digraph.  Previous work has shown that there are no $(2,k,+1)$-digraphs for $k \geq 2$.  In a separate paper, the present author has shown that any $(2,k,+2)$-digraph must be diregular for $k \geq 2$.  In the present work, this analysis is completed by proving the nonexistence of diregular $(2,k,+2)$-digraphs for $k \geq 3$ and classifying diregular $(2,2,+2)$-digraphs up to isomorphism.

\end{abstract}

\begin{keyword}
Degree/diameter problem \sep Digraphs \sep Excess \sep Extremal digraphs 

\MSC  05C35 \sep 05C20 \sep 90C35
\end{keyword}

\end{frontmatter}

\section{Introduction}
\label{S:1}

An important topic in the design of interconnection networks is the directed degree/diameter problem: what is the largest possible order $N(d,k)$ of a digraph $G$ with maximum out-degree $d$ and diameter $\leq k$?  A simple inductive argument shows that for $0 \leq l \leq k$ the number of vertices at distance $l$ from a fixed vertex $v$ is bounded above by $d^l$.  Therefore, a natural upper bound for the order of such a digraph is the so-called \textit{Moore bound} $M(d,k) = 1 + d+ d^2 + ... + d^k$.  A digraph that attains this upper bound is called a \textit{Moore digraph}.  It is easily seen that a digraph $G$ is Moore if and only if it is out-regular with degree $d$, has diameter $k$ and is $k$-geodetic, i.e. for any two vertices $u, v$ there is at most one $\leq k$-path from $u$ to $v$.

As it was shown by Bridges and Toueg in \cite{BriTou} that Moore digraphs exist only in the trivial cases $d = 1$ or $k = 1$ (the Moore digraphs are directed $(k+1)$-cycles and complete digraphs $K_{d+1}$ respectively), much research has been devoted to the study of digraphs that in some sense approximate Moore digraphs.  For example, there is an extensive literature on digraphs with maximum out-degree $d$, diameter $\leq k$ and order $M(d,k) - \delta $ for small $\delta > 0$; this is equivalent to relaxing the $k$-geodecity requirement in the conditions for a digraph to be Moore.  $\delta $ is known as the \textit{defect} of the digraph.  The reader is referred to the survey \cite{MilSir2} for more information.

In this paper, however, we will consider the following related problem, which is obtained by retaining the $k$-geodecity requirement in the above characterisation of Moore digraphs, but allowing the diameter to exceed $k$: what is the smallest possible order of a $k$-geodetic digraph $G$ with minimum out-degree $\geq d$?  A $k$-geodetic digraph with minimum out-degree $\geq d$ and order $M(d,k) + \epsilon $ is said to be a $(d,k,+\epsilon )$-digraph or to have \textit{excess} $\epsilon $.  It was shown in \cite{Sil} that there are no diregular $(2,k,+1)$-digraphs for $k \geq 2$.  In 2016 it was shown in \cite{MirSil} that digraphs with excess one must be diregular and that there are no $(d,k,+1)$-digraphs for $k = 2,3,4$ and sufficiently large $d$.  In a separate paper \cite{Tui}, the present author has shown that $(2,k,+2)$-digraphs must be diregular with degree $d = 2$ for $k \geq 2$.  In the present paper, we classify the $(2,2,+2)$-digraphs up to isomorphism and show that there are no diregular $(2,k,+2)$-digraphs for $k \geq 3$, thereby completing the proof of the nonexistence of digraphs with degree $d = 2$ and excess $\epsilon = 2$ for $k \geq 3$.  Our reasoning and notation will follow closely that employed in \cite{MilSir} for the corresponding result for defect $\delta = 2$.

\section{Preliminary results}

We will let $G$ stand for a $(2,k,+2)$-digraph for arbitrary $k \geq 2$, i.e. $G$ has minimum out-degree $d = 2$, is $k$-geodetic and has order $M(2,k)+2$.  We will denote the vertex set of $G$ by $V(G)$.  By the result of \cite{Tui}, $G$ must be diregular with degree $d = 2$ for $k \geq 2$.  The distance $d(u,v)$ between vertices $u$ and $v$ is the length of the shortest path from $u$ to $v$.  Notice that $d(u,v)$ is not necessarily equal to $d(v,u)$.  $u \rightarrow v$ will indicate that there is an arc from $u$ to $v$.  We define the in- and out-neighbourhoods of a vertex $u$ by $N^-(u) = \{ v \in V(G) : v \rightarrow u\} $  and $N^+(u) = \{ v \in V(G) : u \rightarrow v\} $ respectively; more generally, for $0 \leq l \leq k$, the set $\{ v \in V(G) : d(u,v) = l\} $ of vertices at distance exactly $l$ from $u$ will be denoted by $N^l(u)$.  For $0 \leq l \leq k$ we will also write $T_l(u) = \cup ^{l}_{i=0}N^i(u)$ for the set of vertices at distance $\leq l$ from $u$.  The notation $T_{k-1}(u)$ will be abbreviated by $T(u)$.

It is easily seen that for any vertex $u$ of $G$, there are exactly two distinct vertices that are at distance $\geq k + 1$ from $u$.  For any $u \in V(G)$, we will write $O(u)$ for the set of these vertices and call such a set an \textit{outlier set} and its elements \textit{outliers} of $u$.  Notice that $O(u) = V(G) - T_k(u)$.  An elementary counting argument shows that in a diregular $(2,k,+2)$-digraph every vertex is also an outlier of exactly two vertices.  We will say that a vertex $u$ can \textit{reach} a vertex $v$ if $v \not \in O(u)$.  

Our proof will proceed by an analysis of a pair of vertices with exactly one common out-neighbour.  First, we must show that such a pair exists and deduce some elementary properties of pairs of vertices with identical out-neighbourhoods.

\begin{lemma}\label{identical neighbourhoods lemma}
For $k \geq 2$, let $u$ and $v$ be distinct vertices such that $N^+(u) = N^+(v) = \{ u_1,u_2\} $.  Then $u_1 \in O(u_2), u_2 \in O(u_1)$ and there exists a vertex $x$ such that $O(u) = \{ v,x\} , O(v) = \{ u,x\} $.  
\end{lemma}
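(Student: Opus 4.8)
The plan is to exploit the hypothesis that $u$ and $v$ share their whole out-neighbourhood, which pins down the in-neighbourhoods of $u_1$ and $u_2$ and, combined with $k$-geodecity, controls the girth of $G$. First I would record two facts. Since $G$ is diregular of degree $2$ and both $u \rightarrow u_1$ and $v \rightarrow u_1$ are arcs with $u \neq v$, the vertex $u_1$ already has two distinct in-neighbours, so $N^-(u_1) = \{u,v\}$; symmetrically $N^-(u_2) = \{u,v\}$. Secondly, a cycle of length $\le k$ through a vertex $w$ would, together with the trivial path, give two distinct $\le k$-paths from $w$ to itself, contradicting $k$-geodecity; hence $G$ has girth at least $k+1$.

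For the first conclusion I would show $d(u_2,u_1) \ge k+1$. As every in-neighbour of $u_1$ lies in $\{u,v\}$, one has $d(u_2,u_1) = 1 + \min\{d(u_2,u),\, d(u_2,v)\}$. The arc $u \rightarrow u_2$ together with a shortest path from $u_2$ back to $u$ forms a cycle of length $1 + d(u_2,u)$, so the girth bound gives $d(u_2,u) \ge k$ (vacuously so if $u_2$ cannot reach $u$); the arc $v \rightarrow u_2$ gives $d(u_2,v) \ge k$ in the same way. Hence $d(u_2,u_1) \ge k+1$, i.e. $u_1 \in O(u_2)$, and interchanging the roles of $u_1$ and $u_2$ yields $u_2 \in O(u_1)$. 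I expect this to be the main obstacle: a direct attempt to force a contradiction from two short paths into $u_1$ stalls precisely at the boundary case $d(u_2,u_1)=k$, and routing the argument through the forced in-neighbours and the girth is the cleanest way to dispose of that case.

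For the remaining conclusion I would compare the distance balls of $u$ and $v$. Because $N^+(u)=\{u_1,u_2\}$, a vertex $z \neq u$ has $d(u,z)\le k$ if and only if $z \in S := T_{k-1}(u_1)\cup T_{k-1}(u_2)$, so $T_k(u)=\{u\}\cup S$; since $N^+(v)=\{u_1,u_2\}$ as well, identically $T_k(v)=\{v\}\cup S$. The girth bound forces $u\notin S$ and $v\notin S$, for otherwise $u$ (resp. $v$) would be reachable from some $u_i$ within $k-1$ steps and the arc $u\rightarrow u_i$ (resp. $v\rightarrow u_i$) would close a cycle of length $\le k$. Thus $S$ is disjoint from $\{u\}$ and from $\{v\}$, giving $|S| = |T_k(u)|-1 = M(2,k)-1$ and hence $|V(G)\setminus S| = 3$. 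This three-element set contains the distinct vertices $u$ and $v$, so it is $\{u,v,x\}$ for a unique further vertex $x$, and then $O(u)=V(G)\setminus T_k(u)=(V(G)\setminus S)\setminus\{u\}=\{v,x\}$ while $O(v)=\{u,x\}$, as required.
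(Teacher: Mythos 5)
Your proof is correct and rests on essentially the same ingredients as the paper's own argument: $k$-geodecity forbidding $\leq k$-cycles, the forced in-neighbourhoods $N^-(u_1) = N^-(u_2) = \{u,v\}$ coming from diregularity, and the fact that every vertex has exactly two outliers. The only difference is presentational — you identify $O(u)$ and $O(v)$ by counting the complement of the common set $S = T(u_1) \cup T(u_2)$ rather than element by element, and you prove $u_1 \in O(u_2)$ via the distance formula $d(u_2,u_1) = 1 + \min\{d(u_2,u), d(u_2,v)\}$ where the paper tracks the penultimate vertex of a hypothetical path — so the two arguments coincide in substance.
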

\begin{proof}
Suppose that $u$ can reach $v$ by a $\leq k$-path.  Then $v \in T(u_1) \cup T(u_2)$.  As $N^+(v) = N^+(u)$, it follows that there would be a $\leq k$-cycle through $v$, contradicting $k$-geodecity.  If $O(u) = \{ v,x\} $, then $x \not = v$ and $x \not \in T(u_1) \cup T(u_2)$, so that $v$ cannot reach $x$ by a $\leq k$-path.  Similarly, if $u_1$ can reach $u_2$ by a $\leq k$-path, then we must have $\{ u,v\} \cap T(u_1) \not = \varnothing $, which is impossible. 
\end{proof}

\begin{lemma}\label{vertices with one common neighbour} 
For $k \geq 2$, there exists a pair of vertices $u,v$ with $|N^+(u) \cap N^+(v)| = 1$.
\end{lemma}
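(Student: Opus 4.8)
The plan is to argue by contradiction through a parity count on $V(G)$. First I would suppose, for contradiction, that no pair of distinct vertices has exactly one common out-neighbour, so that $|N^+(u) \cap N^+(v)| \in \{0,2\}$ for all distinct $u,v$. Since $G$ is diregular of degree $2$ and $k$-geodecity forbids parallel arcs, each out-neighbourhood is a genuine $2$-element set; hence $|N^+(u) \cap N^+(v)| = 2$ forces $N^+(u) = N^+(v)$. Thus under the contradiction hypothesis any two out-neighbourhoods are either identical or disjoint.

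Next I would examine the family of \emph{distinct} out-neighbourhood sets $S_1, \dots, S_m$. By the previous step these are pairwise disjoint $2$-subsets of $V(G)$. Because $G$ is diregular, every vertex $w$ has in-degree $2 \geq 1$, so $w$ is the out-neighbour of at least one vertex, i.e. $w \in S_i$ for some $i$. Therefore the $S_i$ cover $V(G)$, and being pairwise disjoint they partition $V(G)$ into blocks of size $2$. Consequently $|V(G)|$ would be even.

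Finally I would invoke the order formula to close the argument: $|V(G)| = M(2,k) + 2 = (2^{k+1}-1) + 2 = 2^{k+1}+1$, which is odd for every $k \geq 2$. This contradicts the parity forced above, so a pair $u,v$ with $|N^+(u) \cap N^+(v)| = 1$ must exist.

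The argument is short and, notably, I do not expect to need Lemma~\ref{identical neighbourhoods lemma}; the parity of $2^{k+1}+1$ carries the proof. The only points requiring care are the two structural inputs from diregularity, namely that each out-neighbourhood really has two distinct elements (so that an intersection of size $2$ means equality) and that in-degree $2$ forces every vertex to appear in some out-neighbourhood (so that the disjoint $2$-sets \emph{cover} $V(G)$ rather than merely pack inside it). Both follow from the degree-$2$ diregularity established in \cite{Tui} together with $k$-geodecity ruling out multiple arcs, after which the oddness of the order is the crux.
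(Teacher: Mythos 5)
Your proof is correct and takes essentially the same approach as the paper: both assume every pair of distinct vertices shares $0$ or $2$ out-neighbours, deduce that $|V(G)|$ must then be even, and contradict this with the odd order $M(2,k)+2 = 2^{k+1}+1$. The only cosmetic difference is the mechanism for evenness: you partition $V(G)$ into the pairwise-disjoint out-neighbourhood $2$-sets (using in-degree $\geq 1$ for coverage), whereas the paper defines a fixed-point-free involution $\phi$ sending each vertex to the other in-neighbour of any of its out-neighbours; both rest on the same inputs, namely diregularity and the contradiction hypothesis.
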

\begin{proof}
Suppose for a contradiction that there is no such pair of vertices.  Define a map $\phi : V(G) \rightarrow V(G)$ as follows.  Let $u^+$ be an out-neighbour of a vertex $u$ and let $\phi (u)$ be the in-neighbour of $u^+$ distinct from $u$.  By our assumption, it is easily verified that $\phi $ is a well-defined bijection with no fixed points and with square equal to the identity.  It follows that $G$ must have even order, whereas $|V(G)| = M(2,k)+2$ is odd.  
\end{proof}
$u, v$ will now stand for a pair of vertices with a single common out-neighbour.  We will label the vertices of $T_k(u)$ according to the scheme $N^+(u) = \{ u_1,u_2\} , N^+(u_1) = \{ u_3, u_4\} , N^+(u_2) = \{ u_5, u_6\} , N^+(u_3) = \{ u_7,u_8\} , N^+(u_4) = \{ u_9,u_{10}\} $ and so on, with the same convention for the vertices of $T_k(v)$, where we will assume that $u_2 = v_2$.

\section{Classification of $(2,2,+2)$-digraphs}
We begin by classifying the $(2,2,+2)$-digraphs up to isomorphism.  We will prove the following theorem.

\begin{theorem}
There are exactly two diregular $(2,2,+2)$-digraphs, which are displayed in Figures \ref{fig:badpairdigraph} and \ref{fig:firstgooddigraph}.
\end{theorem}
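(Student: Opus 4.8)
The plan is to bootstrap the entire digraph from the pair $u,v$ furnished by Lemma~\ref{vertices with one common neighbour}. Write $N^+(u) = \{u_1,u_2\}$ and $N^+(v) = \{v_1,u_2\}$ with $v_1 \neq u_1$, so that $u_2$ is their unique common out-neighbour. Since $G$ is diregular of degree $2$, every vertex has in-degree $2$ as well, and as $u,v \in N^-(u_2)$ we obtain $N^-(u_2) = \{u,v\}$ exactly. Following the labelling convention fixed after Lemma~\ref{vertices with one common neighbour}, the ball $T_2(u) = \{u,u_1,u_2,u_3,u_4,u_5,u_6\}$ consists of seven distinct vertices (distinctness follows from $2$-geodecity), and the two remaining vertices of $G$ form the outlier set $O(u)$. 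The corresponding ball for $v$ is $T_2(v) = \{v,v_1,u_2,v_3,v_4,u_5,u_6\}$, since $N^+(u_2) = \{u_5,u_6\}$ is already determined; in particular $v,v_1,v_3,v_4$ are four further distinct vertices.

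The heart of the argument is a finite case analysis locating these four vertices inside the nine-element set $V(G) = \{u,u_1,\dots,u_6\} \cup O(u)$. They cannot equal $u_2,u_5,u_6$, and an elementary check—any alternative would produce a digon or two distinct $\leq 2$-paths—eliminates $v \in \{u,u_1\}$ and constrains $v_1$; hence each of $v,v_1,v_3,v_4$ is pinned to a short list of admissible positions drawn from $\{u_3,u_4\}\cup O(u)$ together with the earlier labels. First I would use the $2$-regularity of the in-neighbourhoods to force several of these identifications, and then in each surviving branch I would complete the adjacency list of all nine vertices, discarding at each step any configuration that yields a cycle of length $\leq 2$, a repeated short path, or a vertex of the wrong degree. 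Exploiting the symmetry between $u_1$ and $u_2$, and between the two balls, lets me fix a normal form and keep the number of branches small.

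Each branch terminates either in a contradiction or in a fully determined $2$-regular digraph on nine vertices; I expect exactly two to emerge, which I would then identify with the digraphs of Figures~\ref{fig:badpairdigraph} and~\ref{fig:firstgooddigraph}. To finish, I would verify directly that each survivor is $2$-geodetic with every out-ball $T_2$ of size seven (so that its excess is genuinely $2$), and that the two are non-isomorphic, for instance by comparing an isomorphism invariant such as the cycle structure or the configuration of the outlier sets. The main obstacle is not any single deduction but the bookkeeping: organising the identifications of $v,v_1,v_3,v_4$ so that the case tree is provably exhaustive yet small enough to check by hand, and confirming at the leaves that each surviving candidate really is a valid $(2,2,+2)$-digraph rather than a near-miss that violates $2$-geodecity on some longer configuration.
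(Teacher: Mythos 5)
Your setup is sound and is in fact the same opening as the paper's: fix the pair $u,v$ with unique common out-neighbour $u_2$ from Lemma~\ref{vertices with one common neighbour}, note $N^-(u_2)=\{u,v\}$, and observe that the classification reduces to locating $v,v_1,v_3,v_4$ among $\{u_3,u_4\}\cup O(u)$ (the exclusions you cite via $2$-geodecity are correct). But what you have written after that is a plan, not a proof. The entire mathematical content of this theorem \emph{is} the exhaustive case analysis, and you have not carried it out: ``I expect exactly two to emerge'' is precisely the statement that needs to be proved. Nothing in your proposal establishes that the case tree is exhaustive, that the contradictory branches actually die, or that exactly two non-isomorphic digraphs survive; each of these requires the several pages of adjacency-chasing that the paper performs. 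As it stands, a reader could not reconstruct the result from your outline.

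Two further concrete points. First, you are missing the organizational tools that make the analysis tractable by hand. The paper introduces the dichotomy of \emph{bad} versus \emph{good} pairs (a pair $(u,v)$ is bad when one of $O(u)\cap\{v_1,v_3\}$, $O(u)\cap\{v_1,v_4\}$, $O(v)\cap\{u_1,u_3\}$, $O(v)\cap\{u_1,u_4\}$ is empty), proves there is a unique digraph containing a bad pair, and only then splits the all-good case into three configurations; it also leans repeatedly on the counting fact that in a diregular $(2,k,+2)$-digraph every vertex is an outlier of \emph{exactly} two vertices, which is what forces most of the arc placements (e.g.\ ``$x\in O(u)\cap O(v)$, so $u_2$ must reach $x$''). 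Your proposal invokes neither, so your claim that the branches stay ``small enough to check by hand'' is unsubstantiated. Second, your proposed normal form exploits ``the symmetry between $u_1$ and $u_2$,'' but there is no such symmetry: $u_2$ is the distinguished common out-neighbour of $u$ and $v$, while $u_1$ is not, and they play entirely different roles. The usable symmetries are the swaps $u_3\leftrightarrow u_4$, $u_5\leftrightarrow u_6$, $v_3\leftrightarrow v_4$, and the exchange of the roles of $u$ and $v$; building your case reduction on the wrong symmetry group risks either missing cases or collapsing genuinely distinct ones.
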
  

Let $G$ be an arbitrary diregular $(2,2,+2)$-digraph.  $G$ has order $M(2,2)+2 = 9$.  By Lemma \ref{vertices with one common neighbour}, $G$ contains a pair of vertices $(u,v)$ such that $|N^+(u) \cap N^+(v)| = 1$; we will assume that $u_2 = v_2$, so that we have the situation shown in Figure \ref{fig:setup}.

\begin{figure}\centering
\begin{tikzpicture}[middlearrow=stealth,x=0.2mm,y=-0.2mm,inner sep=0.1mm,scale=1.85,
	thick,vertex/.style={circle,draw,minimum size=10,font=\tiny,fill=white},edge label/.style={fill=white}]
	\tiny
	\node at (150,0) [vertex] (v0) {$u$};
	\node at (100,50) [vertex] (v1) {$u_1$};
	\node at (200,50) [vertex] (v2) {$u_2$};
	\node at (300,50) [vertex] (v3) {$v_1$};
	\node at (130,100) [vertex] (v4) {$u_4$};
           \node at (170,100) [vertex] (v5) {$u_5$};
	\node at (230,100) [vertex] (v6) {$u_6$};
	\node at (250,0) [vertex] (v7) {$v$};
          \node at (70,100) [vertex] (v9) {$u_3$};
           \node at (270,100) [vertex] (v10) {$v_3$};
          \node at (330,100) [vertex] (v11) {$v_4$};

	\path
		(v0) edge [middlearrow] (v1)
                     (v0) edge [middlearrow] (v2)
                     (v1) edge [middlearrow] (v9)
                     (v1) edge [middlearrow] (v4)
                     (v2) edge [middlearrow] (v5)
                     (v2) edge [middlearrow] (v6)
                     (v7) edge [middlearrow] (v2)
                     (v7) edge [middlearrow] (v3)
                     (v3) edge [middlearrow] (v10)
                     (v3) edge [middlearrow] (v11)
				
		;
\end{tikzpicture}
\caption{The vertices $u$ and $v$}
\label{fig:setup}
\end{figure}
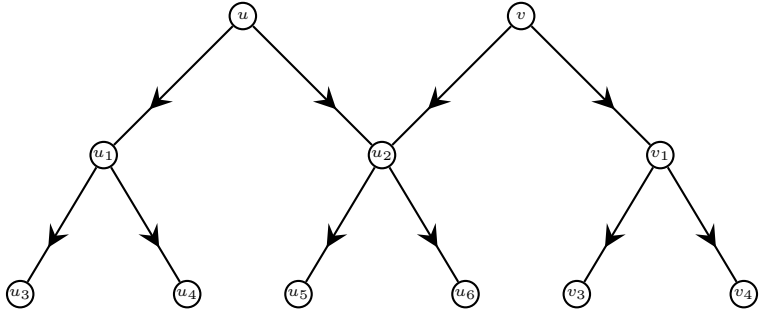

We can immediately deduce some information on the possible positions of $v$ and $v_1$ in $T_2(u)$.

\begin{lemma}\label{k equals 2 position of v1}
If $v \not \in O(u)$, then $v \in N^+(u_1)$.  If $v_1 \not \in O(u)$, then $v_1 \in N^+(u_1)$.
\end{lemma}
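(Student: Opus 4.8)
The plan is to exploit the fact that, by $2$-geodecity together with out-regularity of degree two, the depth-two out-tree of \emph{every} vertex $w$ of $G$ contains exactly $M(2,2) = 7$ distinct vertices, so that the three levels $\{u\}$, $\{u_1,u_2\}$ and $\{u_3,u_4,u_5,u_6\}$ of $T_2(u)$ are pairwise disjoint. Hence $v \notin O(u)$ forces $v \in T_2(u)$, and it will suffice to eliminate every candidate position for $v$ in $T_2(u)$ except the two vertices $N^+(u_1) = \{u_3,u_4\}$; the same scheme handles $v_1$. Throughout I would lean on two consequences of $2$-geodecity: no ordered pair of vertices is joined by two distinct $\leq 2$-paths, and $G$ contains no digon (a digon at $x$ would make $x$ reappear in its own depth-two out-tree, forcing $|T_2(x)| < 7$).

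For the first statement I would assume $v \notin O(u)$ and run through the seven candidates. We have $v \neq u$ by hypothesis. Since $v \rightarrow v_2 = u_2$, the choice $v = u_2$ would create a loop at $u_2$, while $v = u_1$ would give $N^+(u_1) = N^+(v) = \{v_1,u_2\}$ and hence $u_2 \in \{u_3,u_4\} \subseteq N^2(u)$, contradicting $u_2 \in N^1(u)$ and the disjointness of the levels of $T_2(u)$. Finally, $v = u_5$ (respectively $v = u_6$) would combine $u_2 \rightarrow v$ with $v \rightarrow u_2$ to produce the forbidden digon $u_2 \rightarrow v \rightarrow u_2$. The only survivors are $v \in \{u_3,u_4\} = N^+(u_1)$, as claimed.

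For the second statement I would assume $v_1 \notin O(u)$, so $v_1 \in T_2(u)$, and eliminate the forbidden positions using the two-paths obstruction. As the out-neighbours $v_1, v_2 = u_2$ of $v$ are distinct, $v_1 \neq u_2$. If $v_1 = u_1$ then $N^+(v) = \{u_1,u_2\} = N^+(u)$, contradicting $|N^+(u) \cap N^+(v)| = 1$. If $v_1 = u$ then $v \rightarrow u \rightarrow u_2$ together with the arc $v \rightarrow u_2$ gives two distinct $\leq 2$-paths from $v$ to $u_2$; and if $v_1 \in \{u_5,u_6\} = N^+(u_2)$ then $v \rightarrow u_2 \rightarrow v_1$ together with the arc $v \rightarrow v_1$ gives two distinct $\leq 2$-paths from $v$ to $v_1$. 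Each violates $2$-geodecity, leaving only $v_1 \in \{u_3,u_4\} = N^+(u_1)$.

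The whole argument is a finite case check, and the step that I expect to need the most care — and which is the main obstacle to a purely mechanical verification — is ruling out $v \in \{u_5,u_6\}$. Unlike the other cases it does not directly produce two short paths between one fixed ordered pair; instead one must invoke the absence of digons, equivalently the fact that every depth-two out-tree attains the full Moore bound of seven vertices, so I would make sure that this counting fact is available (or recorded) before the case analysis.
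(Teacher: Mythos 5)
Your proof is correct and follows essentially the same route as the paper: both arguments note that $v \notin O(u)$ (resp.\ $v_1 \notin O(u)$) places the vertex among the seven distinct vertices of $T_2(u)$, and then eliminate $u$, $u_1$, $u_2$ and $N^+(u_2)$ by $2$-geodecity (loops, digons, and duplicated $\leq 2$-paths to $u_2$ or $v_1$), leaving only $N^+(u_1)$. The paper compresses the loop/digon cases into the single statement ``$v \notin T(u_2)$ by $2$-geodecity,'' which is exactly the step you spell out explicitly.
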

\begin{proof}
$v \not \in T(u_2)$ by $2$-geodecity.  $v \not = u$ by construction.  If we had $v = u_1$, then there would be two distinct $\leq 2$-paths from $u$ to $u_2$. Also $v_1 \not \in \{ u\} \cup T(u_2)$ by 2-geodecity and by assumption $u_1 \not = v_1$.
\end{proof} 

Since $v$ and $v_1$ cannot both lie in $N^+(u_1)$ by 2-geodecity, we have the following corollary.

\begin{corollary}\label{k equals 2 v or v1 an outlier}
$O(u) \cap \{ v,v_1\} \not = \varnothing $.
\end{corollary}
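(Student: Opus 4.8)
The plan is to argue by contradiction, deducing that if neither $v$ nor $v_1$ is an outlier of $u$ then both would have to be out-neighbours of $u_1$, which then forces two distinct $\leq 2$-paths from $u_1$ to $v_1$ and violates $2$-geodecity.

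First I would suppose that $O(u) \cap \{ v,v_1\} = \varnothing $, i.e. that $v \not\in O(u)$ and $v_1 \not\in O(u)$ hold simultaneously. Applying Lemma \ref{k equals 2 position of v1} to each assumption in turn yields $v \in N^+(u_1)$ and $v_1 \in N^+(u_1)$. Since $v_1 \in N^+(v)$ and the digraph is loopless (a loop at $v$ would give a $\leq k$-cycle, contradicting $k$-geodecity for $k \geq 2$), we have $v \neq v_1$; hence these two memberships exhaust the two-element set $N^+(u_1) = \{ u_3, u_4\}$, forcing $N^+(u_1) = \{ v, v_1\}$.

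The contradiction then comes from the arc $v \rightarrow v_1$: the path $u_1 \rightarrow v \rightarrow v_1$ of length $2$ together with the arc $u_1 \rightarrow v_1$ of length $1$ constitute two distinct $\leq 2$-paths from $u_1$ to $v_1$, which is impossible in a $2$-geodetic digraph. I do not expect any genuine obstacle here, as the statement is an immediate consequence of the previous lemma once one notices the arc $v \rightarrow v_1$. The only points requiring a moment's care are confirming that $v$ and $v_1$ are distinct (so that $N^+(u_1)$ is indeed forced to equal $\{ v,v_1\}$) and that $u_1, v, v_1$ are pairwise distinct enough for the two exhibited paths to be legitimate and different; both facts follow from the absence of short cycles in a $k$-geodetic digraph.
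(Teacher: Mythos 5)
Your proof is correct and is essentially the paper's own argument: the paper derives the corollary from the same lemma, remarking that $v$ and $v_1$ cannot both lie in $N^+(u_1)$ by 2-geodecity, which is precisely the contradiction you spell out via the arc $v \rightarrow v_1$. Your version merely makes explicit the distinctness checks that the paper leaves implicit.
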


We will call a pair of vertices $(u,v)$ with a single common out-neighbour \textit{bad} if at least one of 
\[ O(u) \cap \{ v_1,v_3\} = \varnothing , O(u) \cap \{ v_1,v_4\} = \varnothing ,  O(v) \cap \{ u_1,u_3\} = \varnothing , O(v) \cap \{u_1,u_4\} = \varnothing .\]
holds.  Otherwise such a pair will be called \textit{good}.

\begin{lemma}
There is a unique $(2,2,+2)$-digraph containing a bad pair.
\end{lemma}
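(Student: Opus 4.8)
The plan is to prove the statement by showing that the presence of a bad pair forces the whole digraph, so that uniqueness follows from a single \emph{forcing argument}; existence is then settled by exhibiting the digraph of Figure~\ref{fig:badpairdigraph}. First I would collapse the four defining conditions of a bad pair to a single one using symmetry. Interchanging the roles of $u$ and $v$ fixes the common out-neighbour $u_2=v_2$, swaps $u_1\leftrightarrow v_1$ and the out-neighbours of $u_1$ with those of $v_1$, and so sends the condition $O(u)\cap\{v_1,v_3\}=\varnothing$ to $O(v)\cap\{u_1,u_3\}=\varnothing$; relabelling the two out-neighbours of $v_1$ interchanges the two conditions featuring $O(u)$, and relabelling those of $u_1$ interchanges the two featuring $O(v)$. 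Hence I may assume $O(u)\cap\{v_1,v_3\}=\varnothing$.

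Next I would pin down the local structure. Since $v_1\not\in O(u)$, Lemma~\ref{k equals 2 position of v1} gives $v_1\in N^+(u_1)$, and after relabelling I may take $v_1=u_3$; Corollary~\ref{k equals 2 v or v1 an outlier} then forces $v\in O(u)$, so writing $O(u)=\{v,w\}$ identifies the nine vertices as $\{u,v,w,u_1,\dots,u_6\}$. As all in-degrees equal $2$, the in-neighbourhoods of $u_2$ and $u_3$ are forced to be $\{u,v\}$ and $\{u_1,v\}$. The crucial step is to locate $v_3$. Since $v_3\in N^+(u_3)$ lies in $T_2(u)=\{u,u_1,\dots,u_6\}$, I would eliminate each candidate: a loop is impossible, the arcs $u_3\to u_1$ and $u_3\to u_4$ create a digon or a second $\le 2$-path from $u_1$, the arc $u_3\to u_2$ is blocked because $u_2$ already has in-degree $2$, and — the key observation — $v_3\in\{u_5,u_6\}$ would give the two $\le 2$-paths $v\to u_2\to v_3$ and $v\to u_3\to v_3$. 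This leaves only $v_3=u$, and an analogous elimination (now also ruling out $v_4=v$ as a digon) forces $v_4=w$.

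From here the digraph is completely determined. The workhorse is the principle that two vertices sharing an out-neighbour can have no common in-neighbour, since otherwise some vertex would have two $\le 2$-paths to that out-neighbour; applied to the pairs $\{u,v\}$ (sharing $u_2$) and $\{u_1,v\}$ (sharing $u_3$), together with the absence of digons and the saturation of in- and out-degrees, this forces in succession $u_5\to u$ (up to the symmetry $u_5\leftrightarrow u_6$), then $u_6\to u_1$, then $u_6\to w$, then $u_5\to u_4$, then $u_4\to u_6$ and $u_4\to v$, and finally $w\to u_5$ and $w\to v$, leaving no freedom whatsoever. I would close by checking directly that for every vertex the set $T_2$ has exactly seven vertices, so that the digraph is diregular, $2$-geodetic and of order $9$, hence a genuine $(2,2,+2)$-digraph, and that it coincides with the one drawn in Figure~\ref{fig:badpairdigraph}.

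The main obstacle is bookkeeping rather than ideas: I must ensure the forcing is genuinely exhaustive at each stage, in particular that the eliminations of the candidate positions for $v_3,v_4$ and of the out-neighbours of $u_4,u_5,u_6,w$ leave a unique possibility up to the residual relabelling symmetries, and that no discarded branch secretly produces a non-isomorphic digraph. The common-out-neighbour principle is what keeps the argument short, collapsing a potentially large case split into a single chain of forced arcs.
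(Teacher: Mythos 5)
Your proposal is correct, and after the common opening it diverges from the paper's proof in a genuinely different way. Both arguments start identically: reduce by symmetry to $O(u)\cap\{v_1,v_3\}=\varnothing$, place $v_1=u_3$ via Lemma \ref{k equals 2 position of v1}, force $v_3=u$ and $v_4=w$ by elimination, and get $O(u)=\{v,w\}$ from Corollary \ref{k equals 2 v or v1 an outlier}. From there the paper works with outlier sets: it computes $O(v)=\{u_1,u_4\}$ and $u_2\in O(u_1)$, splits into the cases $N^+(u_4)=\{u_5,u_6\}$ and $N^+(u_4)=\{v,u_5\}$, and kills the dead branches using Lemma \ref{identical neighbourhoods lemma} together with the counting fact that every vertex is an outlier of exactly two vertices. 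You instead use in-degree saturation plus local $2$-geodecity to force the eight missing arcs one at a time, and your chain is genuinely forced exactly as claimed: the second in-neighbours of $u$, $u_1$, $w$ must come from $\{u_5,u_6\}$; $u_5\to u$, $u_6\to u_1$ is forced up to the $u_5\leftrightarrow u_6$ symmetry; $u_6\to v$ dies by your common-neighbour principle applied to $\{u_1,v\}$ and $u_6\to u_4$ dies because of $u_6\to u_1\to u_4$, giving $u_6\to w$; then $u_5\to u_4$, $u_4\to\{v,u_6\}$, $w\to\{v,u_5\}$ follow from digons and capacity counting, and the resulting digraph is the one in Figure \ref{fig:badpairdigraph} after swapping $u_5\leftrightarrow u_6$. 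What each approach buys: yours is shorter and more elementary, never invoking the outlier-counting machinery, and the paper's two laborious dead-end cases (in particular $N^+(u_2)=N^+(u_4)$) never even arise; the price is that you must verify $2$-geodecity of the completed digraph explicitly (a nine-vertex check, which you rightly plan to do, and which the paper omits), and that your single stated ``workhorse'' does not literally cover every step --- ruling out $u_6\to u_4$, $w\to u$, or $u_4\to u$ needs the companion facts that no vertex sends arcs to both endpoints of an arc and that the two out-neighbours of a vertex cannot be joined by an arc. These are immediate instances of the ``second $\le 2$-path'' principle you already use in eliminating positions for $v_3$, so this is a presentational point to tidy in the write-up, not a gap.
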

\begin{proof}
Assume that there exists a bad pair $(u,v)$.  Without loss of generality, $O(u) \cap \{ v_1,v_3\} = \varnothing $.  By Lemma \ref{k equals 2 position of v1} we can set $v_1 = u_3$.  By 2-geodecity $v_3 = u$.  We cannot have $v_4 = v_3 = u$, so $v_4$ must be an outlier of $u$.  By Corollary \ref{k equals 2 v or v1 an outlier} it follows that $O(u) = \{ v,v_4\} $.

Consider the vertex $u_1$.  By Lemma \ref{k equals 2 position of v1}, if $u_1 \not \in O(v)$, then $u_1 \in N^+(v_1)$.  However, as $v_1 = u_3$, there would be a 2-cycle through $u_1$.  Hence $u_1 \in O(v)$.  As $O(u) = \{ v,v_4\} $, we have $V(G) = \{ u,u_1,u_2,u_3=v_1,u_4,u_5,u_6,v,v_4\} $ and $O(v) = \{ u_1,u_4\} $.  As neither $u$ nor $v$ lies in $T(u_1)$, we must also have $u_2 \in O(u_1)$.  As $u_1$ can reach $u_1, v_1,u_4,u$ and $v_4$, it follows that without loss of generality we either have $O(u_1) = \{ u_2,v\} $ and $N^+(u_4) = \{ u_5,u_6\} = N^+(u_2)$ or $O(u_1) = \{ u_2,u_6\}$ and $N^+(u_4) = \{ v,u_5\} $.  In either case, $(v,u_1)$ is a good pair.

Suppose firstly that $N^+(u_2)= N^+(u_4)$.  Then $v$ is an outlier of $u$ and $u_1$.  As each vertex is the outlier of exactly two vertices, $v_1$ must be able to reach $v$ by a $\leq 2$-path.  Hence $v_4 \rightarrow v$.  Likewise $u_2$ can reach $v$, so without loss of generality $u_5 \rightarrow v$.  Suppose that $O(u_2) \cap \{ u,u_1\} = \varnothing $.  As $u$ and $v$ have a common out-neighbour, we must have $u_6 \rightarrow u$.  Since $u \rightarrow u_1$, by 2-geodecity we must have $u_5 \rightarrow u_1$.  However, this is a contradiction, as $v$ and $u_1$ also have a common out-neighbour.  Therefore, at least one of $u,u_1$ is an outlier of $u_2$. By Lemma \ref{identical neighbourhoods lemma} $u_4$ is an outlier of $u_2$.  Therefore either $O(u_2) = \{ u,u_4\} $ or $O(u_2) = \{ u_1,u_4\} $.  If $O(u_2) = \{ u,u_4\} $, then $u_2$ must be able to reach $u_1,v_1$ and $v_4$.  $u_5 \rightarrow v$ and $v \rightarrow v_1$, so $v_1 \in N^+(u_6)$.  As $u_1 \rightarrow v_1$, we must have $N^+(u_5) = \{ v,u_1\} $.  As $v$ and $u_1$ have a common out-neighbour, this violates 2-geodecity.  Hence $O(u_2) = \{ u_1,u_4\} $ and $u_2$ can reach $u, v_1$ and $v_4$.  As $v \rightarrow v_1$, $v_1 \in N^+(u_6)$.  As $v_1 \rightarrow v_4$, it follows that $N^+(u_5) = \{ v,v_4\} $.  However, $v_4 \rightarrow v$, so this again violates 2-geodecity. 

We are left with the case $O(u_1) = \{ u_2,u_6\}$ and $N^+(u_4) = \{ v,u_5\} $.  Then $v_1 \in O(u_2)$, as neither $v$ nor $u_1$ lies in $T(u_2)$.  Observe that $u_2$ and $u_4$ have a single common out-neighbour, so by Corollary \ref{k equals 2 v or v1 an outlier} $O(u_2) \cap \{ u_4,v\} \not = \varnothing $.  Therefore either $O(u_2) = \{ v_1,u_4\} $ or $O(u_2) = \{ v_1,v\} $.  Suppose firstly that $O(u_2) = \{ v_1,u_4\} $.  Then $N^2(u_2) = \{ u,v,u_1,v_4 \} $.  As $N^+(u_4) = \{ v,u_5\} $, $u_5 \not \rightarrow v$, so $u_6 \rightarrow v$.  As $N^+(u) \cap N^+(v) \not = \varnothing $, $u_5 \rightarrow u$.  $u \rightarrow u_1$, so necessarily $N^+(u_6) = \{ v,u_1\} $.  However, $v_1 \in N^+(u_1) \cap N^+(v)$, contradicting 2-geodecity.  

Hence $O(u_2) = \{ v_1,v\} $ and $N^2(u_2) = \{ u,u_1,u_4,v_4\} $.  As $u_4 \rightarrow u_5$, $u_5 \not \rightarrow u_4$.  Thus $u_6 \rightarrow u_4$.  Now $u_1 \rightarrow u_4$ and $u \rightarrow u_1$ implies that $N^+(u_5) = \{ u_1,v_4\} $ and $N^+(u_6) = \{ u,u_4\} $.  Finally we must have $N^+(v_4) = \{ v,u_6\} $.  This gives us the $(2,2,+2)$-digraph shown in Figure \ref{fig:badpairdigraph}.     

\end{proof}

\begin{figure}\centering
\begin{tikzpicture}[middlearrow=stealth,x=0.2mm,y=-0.2mm,inner sep=0.1mm,scale=1.85,
	thick,vertex/.style={circle,draw,minimum size=10,font=\tiny,fill=white},edge label/.style={fill=white}]
	\tiny
	\node at (200,0) [vertex] (v0) {$u$};
	\node at (290,130) [vertex] (v1) {$u_1$};
	\node at (250,30) [vertex] (v2) {$u_2$};
	\node at (110,130) [vertex] (v3) {$v_1$};
	\node at (250,190) [vertex] (v4) {$u_4$};
           \node at (330,170) [vertex] (v5) {$u_5$};
	\node at (150,30) [vertex] (v6) {$u_6$};
	\node at (150,190) [vertex] (v7) {$v$};
	\node at (70,170) [vertex] (v8) {$v_4$};

	\path
		(v0) edge [middlearrow] (v1)
		(v0) edge [middlearrow] (v2)
		(v1) edge [middlearrow] (v3)
                     (v1) edge [middlearrow] (v4)
                     (v2) edge [middlearrow] (v5)
                     (v2) edge [middlearrow] (v6)
                     (v3) edge [middlearrow] (v0)
                     (v3) edge [middlearrow] (v8)
	          (v4) edge [middlearrow] (v5)
                     (v4) edge [middlearrow] (v7)
                     (v5) edge [middlearrow] (v1)
                     (v5) edge [middlearrow] (v8)
                     (v6) edge [middlearrow] (v0)
                     (v6) edge [middlearrow] (v4)
                     (v7) edge [middlearrow] (v2)
                     (v7) edge [middlearrow] (v3)
                     (v8) edge [middlearrow] (v6)
                     (v8) edge [middlearrow] (v7)
		
		;
\end{tikzpicture}
\caption{The unique $(2,2,+2)$-digraph containing a bad pair}
\label{fig:badpairdigraph}
\end{figure}
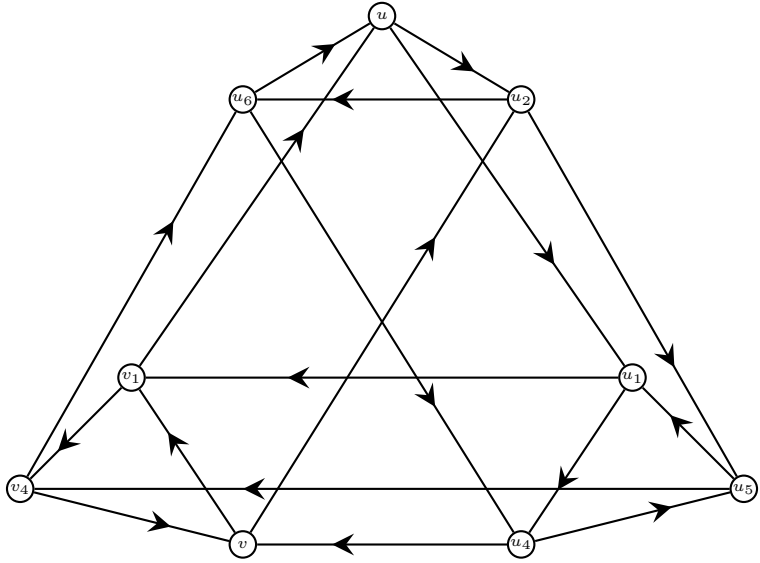

We can now assume that all pairs given by Lemma \ref{vertices with one common neighbour} are good.  Let us fix a pair $(u,v)$ with a single common out-neighbour.  It follows from Corollary \ref{k equals 2 v or v1 an outlier} and the definition of a good pair that $v_1 \in O(u)$; otherwise $O(u)$ would contain $v$, $v_3$ and $v_4$, which is impossible.  Likewise $u_1 \in O(v)$.  

Considering the positions of $v_3$ and $v_4$, we see that there are without loss of generality four possibilities: 1) $u = v_3, u_4 = v_4$, 2) $u = v_3, O(u) = \{ v_1,v_4\} $, 3) $N^+(u_1) = N^+(v_1)$ and 4) $u_3 = v_3, O(u) = \{ v_1,v_4\} $.  A suitable relabelling of vertices shows that case 4 is equivalent to case 1.a) below, so we will examine cases 1 to 3 in turn.
\newline
\newline
{\bf Case 1:} $\mathbf{u = v_3, u_4 = v_4}$

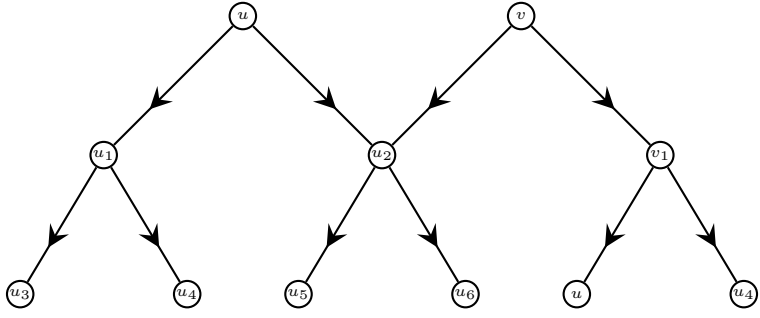
\begin{figure}\centering
\begin{tikzpicture}[middlearrow=stealth,x=0.2mm,y=-0.2mm,inner sep=0.1mm,scale=1.85,
	thick,vertex/.style={circle,draw,minimum size=10,font=\tiny,fill=white},edge label/.style={fill=white}]
	\tiny
	\node at (150,0) [vertex] (v0) {$u$};
	\node at (100,50) [vertex] (v1) {$u_1$};
	\node at (200,50) [vertex] (v2) {$u_2$};
	\node at (300,50) [vertex] (v3) {$v_1$};
	\node at (130,100) [vertex] (v4) {$u_4$};
           \node at (170,100) [vertex] (v5) {$u_5$};
	\node at (230,100) [vertex] (v6) {$u_6$};
	\node at (250,0) [vertex] (v7) {$v$};
          \node at (70,100) [vertex] (v9) {$u_3$};
           \node at (270,100) [vertex] (v10) {$u$};
          \node at (330,100) [vertex] (v11) {$u_4$};

	\path
		(v0) edge [middlearrow] (v1)
                     (v0) edge [middlearrow] (v2)
                     (v1) edge [middlearrow] (v9)
                     (v1) edge [middlearrow] (v4)
                     (v2) edge [middlearrow] (v5)
                     (v2) edge [middlearrow] (v6)
                     (v7) edge [middlearrow] (v2)
                     (v7) edge [middlearrow] (v3)
                     (v3) edge [middlearrow] (v10)
                     (v3) edge [middlearrow] (v11)
				
		;
\end{tikzpicture}
\caption{Case 1 configuration}
\label{fig:case1}
\end{figure}
Depending upon the position of $v$, we must either have $O(u) = \{ v_1,v\} $ and $O(v) = \{ u_1,u_3\} $ or $v = u_3$.
\newline
\newline
{\bf Case 1.a):} $\mathbf{ O(u) = \{ v_1,v\} , O(v) = \{ u_1,u_3\} }$

In this case $V(G) = \{ u,u_1,u_2,u_3,u_4,u_5,u_6,v,v_1\} $.  $u_1$ and $v_1$ have a single common out-neighbour, namely $u_4$, so, as we are assuming all such pairs to be good, we have $u_3 \in O(v_1), u \in O(u_1)$.  By 2-geodecity, $N^+(u_4) \subset \{ u_5,u_6,v\} $, so without loss of generality either $N^+(u_4) = \{ u_5,u_6\} $ or $N^+(u_4) = \{ u_5,v\} $.

Suppose that $N^+(u_4) = \{ u_5,u_6\} $.  By elimination, $O(v_1) = \{ v,u_3\} $.  As $G$ is diregular, every vertex is an outlier of exactly two vertices; $v$ is an outlier of $u$ and $v_1$, so both $u_1$ and $u_2$ can reach $v$ by a $\leq 2$-path.  Hence $v \in N^+(u_3)$.  As $v \rightarrow v_1$, we see that $v_1$ is an outlier of $u_1$; as $u$ is also an outlier of $u_1$, we have $O(u_1) = \{ u,v_1\} $ and $N^+(u_3) = \{ v,u_2\} $.  As $v \rightarrow u_2$, this is impossible.

Now consider $N^+(u_4) = \{ u_5,v\} $.  We now have $O(v_1) = \{ u_3,u_6\} $.  Thus $u_3 \in O(v) \cap O(v_1)$, so $u_3 \in T_2(u_4)$.  $v$ is not adjacent to $u_3$, so $u_3 \in N^+(u_5)$.  $u_2$ and $u_4$ have $u_5$ as a unique common out-neighbour, so $u_6 \in O(u_4), v \in O(u_2)$.  As $u_6 \in O(v_1) \cap O(u_4)$, $u_1$ can reach $u_6$.  Hence $u_6 \in N^+(u_3)$.  Neither $u$ nor $v$ lie in $T(u_1)$, so $u_2 \in O(u_1)$.  Therefore either $O(u_1) = \{ u,u_2\} $ or $O(u_1) = \{ u_2,v_1\} $.  If $O(u_1) = \{ u,u_2\} $, then $N^+(u_3) = \{ u_6,v_1\} $.  $u_2$ can't reach $v_1$, since $v,u_3 \not \in T(u_2)$, so $O(u_2) = \{ v,v_1\} $ and $N^2(u_2) = \{ u,u_1,u_3,u_4\} $.  As $u_4 \rightarrow u_5$, $u_4 \in N^+(u_6)$.  $u_1 \rightarrow u_4$, so $N^+(u_5) = \{ u_1,u_3\} $.  As $u_1 \rightarrow u_3$, this is a contradiction.  Thus $O(u_1) = \{ u_2,v_1\} $, so that $N^+(u_3) = \{ u,u_6\} $.  $u_1$ must have an in-neighbour apart from $u$, which must be either $u_5$ or $u_6$.  As $u_1 \rightarrow u_3$, we have $u_1 \in N^+(u_6)$.  By elimination, $v$ and $v_1$ must also have in-neighbours in $\{ u_5,u_6\} $.  As $u_1$ and $v_1$ have a common out-neighbour, we have $N^+(u_5) = \{ u_3,v_1\} , N^+(u_6) = \{ u_1,v\} $.  However, both $u_3$ and $v_1$ are adjacent to $u$, violating 2-geodecity.
\newline
\newline
{\bf Case 1.b):} $\mathbf{ v = u_3}$

There exists a vertex $x$ such that $V(G) = \{ u,u_1,u_2,v,u_4,u_5,u_6,v_1,x\} $, $O(u) = \{ v_1,x\} $ and $O(v) = \{ u_1,x\} $.  As $x \in O(u) \cap O(v)$, $u_1$ and $u_2$ can reach $x$, so without loss of generality $x \in N^+(u_4) \cap N^+(u_5)$.  As $u_5$ and $u_4$ have a common out-neighbour, $u_5 \in O(u_1)$.  Also, $u_1$ and $v_1$ have $u_4$ as a unique common out-neighbour, so $u \in O(u_1)$ and $O(u_1) = \{ u,u_5\} $.  Thus $N^+(u_4) = \{ x,u_6\} $.  Observe that $u_2$ and $u_4$ have the out-neighbour $u_6$ in common.  Thus $x \in O(u_2)$, whereas we already have $x \in O(u) \cap O(v)$, a contradiction.
\newline
\newline
{\bf Case 2:} $\mathbf{ u = v_3, O(u) = \{ v_1,v_4\} }$

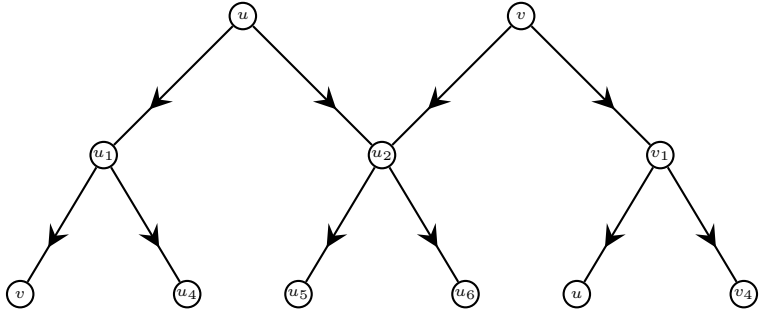
\begin{figure}\centering
\begin{tikzpicture}[middlearrow=stealth,x=0.2mm,y=-0.2mm,inner sep=0.1mm,scale=1.85,
	thick,vertex/.style={circle,draw,minimum size=10,font=\tiny,fill=white},edge label/.style={fill=white}]
	\tiny
	\node at (150,0) [vertex] (v0) {$u$};
	\node at (100,50) [vertex] (v1) {$u_1$};
	\node at (200,50) [vertex] (v2) {$u_2$};
	\node at (300,50) [vertex] (v3) {$v_1$};
	\node at (130,100) [vertex] (v4) {$u_4$};
           \node at (170,100) [vertex] (v5) {$u_5$};
	\node at (230,100) [vertex] (v6) {$u_6$};
	\node at (250,0) [vertex] (v7) {$v$};
          \node at (70,100) [vertex] (v9) {$v$};
           \node at (270,100) [vertex] (v10) {$u$};
          \node at (330,100) [vertex] (v11) {$v_4$};

	\path
		(v0) edge [middlearrow] (v1)
                     (v0) edge [middlearrow] (v2)
                     (v1) edge [middlearrow] (v9)
                     (v1) edge [middlearrow] (v4)
                     (v2) edge [middlearrow] (v5)
                     (v2) edge [middlearrow] (v6)
                     (v7) edge [middlearrow] (v2)
                     (v7) edge [middlearrow] (v3)
                     (v3) edge [middlearrow] (v10)
                     (v3) edge [middlearrow] (v11)
				
		;
\end{tikzpicture}
\caption{Case 2 configuration}
\label{fig:case2}
\end{figure}

As $v$ is not equal to $v_1$ or $v_4$, $v$ must lie in $T_2(u)$.  Without loss of generality, $v = u_3$.  Hence $V(G) = \{ u,u_1,u_2,v,u_4,u_5,u_6,v_1,v_4\} $ and $O(v) = \{ u_1,u_4\} $.  We have the configuration shown in Figure \ref{fig:case2}.  Hence $u_1$ can reach $u_1, v, u_4, u_2$ and $v_1$, so we have without loss of generality one of the following: a) $O(u_1) = \{ u,v_4\} , N^+(u_4) = \{ u_5,u_6\} $, b) $O(u_1) = \{ u,u_5\} , N^+(u_4) = \{ u_6,v_4\} $, c) $O(u_1) = \{ u_5,u_6\} , N^+(u_4) = \{ u,v_4\} $ or d) $O(u_1) = \{ u_5,v_4\} , N^+(u_4) = \{ u,u_6\} $.
\newline
\newline
{\bf Case 2.a):} $\mathbf{ O(u_1) = \{ u,v_4\} , N^+(u_4) = \{ u_5,u_6\} }$

As $v_4 \in O(u) \cap O(u_1)$, $u_2$ can reach $v_4$ and without loss of generality $v_4 \in N^+(u_5)$.  $N^+(u_2) = N^+(u_4)$, so by Lemma \ref{identical neighbourhoods lemma} $u_2 \in O(u_4), u_4 \in O(u_2), u_5 \in O(u_6)$ and $u_6 \in O(u_5)$.  Hence $u_4 \in O(v) \cap O(u_2)$, so $v_1$ can reach $u_4$, so $u_4 \in N^+(v_4)$.  Neither $u_5$ nor $u_6$ lies in $N^+(v_4)$, so $O(v_1) = \{ u_5,u_6\} $ and $N^+(v_4) = \{ u_4,v\} $.  Hence $O(v_4) = \{ u,u_1\} $.  Observe that $N^+(u_1) = N^+(v_4)$, so that $v \in O(u_4)$.  Therefore $v \not \in N^+(u_5) \cup N^+(u_6)$, yielding $O(u_2) = \{ u_4,v\} , N^2(u_2) = \{ v_4,v_1,u,u_1\} $.  As $v_1 \rightarrow v_4$ and $N^+(u_1) = N^+(v_4)$, we must have $N^+(u_5) = \{ v_4,u\} , N^+(u_6) = \{v_1,u_1\} $.  This yields the digraph in Figure \ref{fig:firstgooddigraph}.  Unlike the digraph in Figure \ref{fig:badpairdigraph}, this digraph contains pairs of vertices with identical out-neighbourhoods, so the two are not isomorphic. 
\newline
\newline
{\bf Case 2.b):} $\mathbf{ O(u_1) = \{ u,u_5\} , N^+(u_4) = \{ u_6,v_4\} }$

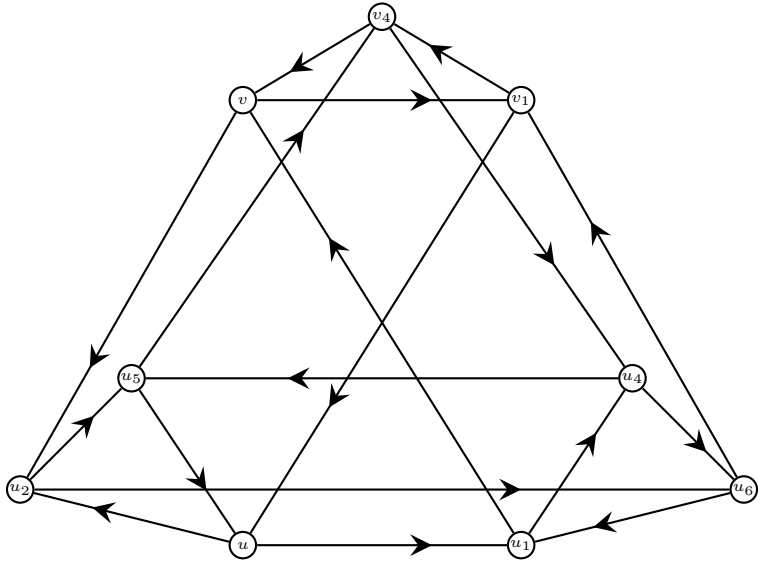
\begin{figure}\centering
\begin{tikzpicture}[middlearrow=stealth,x=0.2mm,y=-0.2mm,inner sep=0.1mm,scale=1.85,
	thick,vertex/.style={circle,draw,minimum size=10,font=\tiny,fill=white},edge label/.style={fill=white}]
	\tiny
	\node at (200,0) [vertex] (v8) {$v_4$};
	\node at (290,130) [vertex] (v4) {$u_4$};
	\node at (250,30) [vertex] (v7) {$v_1$};
	\node at (110,130) [vertex] (v5) {$u_5$};
	\node at (250,190) [vertex] (v1) {$u_1$};
           \node at (330,170) [vertex] (v6) {$u_6$};
	\node at (150,30) [vertex] (v3) {$v$};
	\node at (150,190) [vertex] (v0) {$u$};
	\node at (70,170) [vertex] (v2) {$u_2$};

	\path
		(v0) edge [middlearrow] (v1)
                     (v0) edge [middlearrow] (v2)
		(v1) edge [middlearrow] (v3)
		(v1) edge [middlearrow] (v4)
                     (v2) edge [middlearrow] (v5)
		(v2) edge [middlearrow] (v6)
                     (v3) edge [middlearrow] (v2)
		(v3) edge [middlearrow] (v7)
                     (v4) edge [middlearrow] (v5)
		(v4) edge [middlearrow] (v6)
                     (v5) edge [middlearrow] (v0)
		(v5) edge [middlearrow] (v8)
                     (v6) edge [middlearrow] (v1)
		(v6) edge [middlearrow] (v7)
                     (v7) edge [middlearrow] (v0)
		(v7) edge [middlearrow] (v8)
                     (v8) edge [middlearrow] (v3)
                     (v8) edge [middlearrow] (v4)

		;
\end{tikzpicture}
\caption{A second $(2,2,+2)$-digraph}
\label{fig:firstgooddigraph}
\end{figure}

As $u_4 \rightarrow v_4$, $u_4 \not \in N^+(v_4)$, so $u_4 \in O(v_1)$.  Hence $u_4 \in O(v) \cap O(v_1)$, so $u_2$ can reach $u_4$.  As $u_4 \rightarrow u_6$, we must have $u_5 \rightarrow u_4$.  $u_2$ and $u_4$ have $u_6$ as a common out-neighbour, so $v_4 \in O(u_2), u_5 \in O(u_4)$.  Therefore $v_4 \in O(u) \cap O(u_2)$, so that $u_6$ can reach $v_4$, but $v_4 \not \in T(u_6)$, so $N^+(u_6)$ contains an in-neighbour of $v_4$.  $u_4 \not \in N^+(u_6)$, so we must have $u_6 \rightarrow v_1$.  We have $u_5 \in O(u_4) \cap O(u_1)$, so $v_1$ can reach $u_5$ and hence $v_4 \rightarrow u_5$.  $v_1$ cannot reach $u_6$, as $u_2,u_4 \not \in T(v_1)$, so $O(v_1) = \{ u_4,u_6\} , N^+(v_4) = \{ u_5,v\} $.  Now $u_2$ and $v_4$ have $u_5$ as a unique common out-neighbour, so $u_6 \in O(v_4), v \in O(u_2)$.  Thus $O(u_2) = \{ v,v_4\} $ and $N^2(u_2) = \{ u_4,v_1,u,u_1\} $.  Taking into account adjacencies between members of $N^2(u_2)$, it follows that $N^+(u_5) = \{ u_4,u\} , N^+(u_6) = \{ u_1,v_1\}  $.  However, $(u_2,u_4)$ now constitutes a bad pair, contradicting our assumption.  
\newline
\newline
{\bf Case 2.c):} $\mathbf{ O(u_1) = \{ u_5,u_6\} , N^+(u_4) = \{ u,v_4\} }$

As $u_4 \rightarrow v_4$, $u_4 \not \in N^+(v_4)$.  Hence $u_4 \in O(v) \cap O(v_1)$, implying that $u_2$ can reach $u_4$.  Without loss of generality, $u_5 \rightarrow u_4$.  There are three possibilities: i) $O(v_1) = \{ u_4,u_6\} , N^+(v_4) = \{ v,u_5\} $, ii) $O(v_1) = \{ u_4,u_5\} , N^+(v_4) = \{ v,u_6\} $ and iii) $O(v_1) = \{ u_4,v\} , N^+(v_4) = \{ u_5,u_6\} $.

 i) $O(v_1) = \{ u_4,u_6\} , N^+(v_4) = \{ v,u_5\} $

$u_1$ and $v_4$ have $v$ as a unique common out-neighbour, so $u_4 \in O(v_4)$.  However, this contradicts $v_4 \rightarrow u_5 \rightarrow u_4$.

 ii) $O(v_1) = \{ u_4,u_5\} , N^+(v_4) = \{ v,u_6\} $

Neither $u_4$ nor $v_1$ lie in $T(u_2)$, so $v_4 \in O(u_2)$.  Now observe that $u_2$ and $v_4$ have $u_6$ as unique common out-neighbour, so $v \in O(u_2)$, yielding $O(u_2) = \{ v,v_4\} $ and $N^2(u_2) = \{ u_4,u_1,u,v_1\} $.  As $u_4 \rightarrow u$ and $u \rightarrow u_1$, we must have $N^+(u_5) = \{ u_4,u_1\} , N^+(u_6) = \{ u,v_1\} $, a contradiction, since $u_1 \rightarrow u_4$.

 iii) $O(v_1) = \{ u_4,v\} , N^+(v_4) = \{ u_5,u_6\} $

We now have $N^+(u_2) = N^+(v_4)$, so $u_2 \in O(v_4), v_4 \in O(u_2), u_5 \in O(u_6), u_6 \in O(u_5)$.  Also $N^+(u_4) = N^+(v_1)$, so $u_4 \in O(v_1), v_1 \in O(u_4)$ and $u \in O(v_4)$.  $u \in O(v_4)$ implies that $u \not \in N^+(u_5) \cup N^+(u_6)$, so we see that $u \in O(u_2)$ and hence $O(u_2) = \{ u,v_4\} $ and $N^2(u_2) = \{ u_4,u_1,v,v_1\}$.  As $u_1 \rightarrow u_4$ and $u_1 \rightarrow v$, we have $N^+(u_5) = \{ u_4,v\} , N^+(u_6) = \{ u_1,v_1\} $.  It is not difficult to show that this yields a $(2,2,+2)$-digraph isomorphic to that in Figure \ref{fig:firstgooddigraph}.
\newline
\newline
{\bf Case 2.d):} $\mathbf{ O(u_1) = \{ u_5,v_4\} , N^+(u_4) = \{ u,u_6\} }$

In this case $v_4 \in O(u) \cap O(u_1)$, so $u_2$ can reach $v_4$.  $u_4$ and $v_1$ have unique common out-neighbour $u$, so $v_4 \in O(u_4), u_6 \in O(v_1)$.  If $u_6 \rightarrow v_4$, then we would have $u_4 \rightarrow u_6 \rightarrow v_4$, contradicting $v_4 \in O(u_4)$, so $u_5 \rightarrow v_4$.  This also implies that $u_5 \not \in N^+(v_4)$, so $u_5 \in O(v_1)$, yielding $O(v_1) = \{ u_5,u_6\} $ and $N^+(v_4) = \{ v,u_4\} = N^+(u_1)$.  Now $v_4,u_1 \not \in T(u_2)$, so $O(u_2) = \{ v,u_4\} $ and $N^2(u_2) = \{ v_1,v_4,u,u_1\} $.  As $v_1 \rightarrow v_4$ and $v_1 \rightarrow u$, it follows that $N^+(u_5) = \{ v_4,u\} , N^+(u_6) = \{ u_1,v_1\} $.  However, we now have paths $u_4 \rightarrow u \rightarrow u_1$ and $u_4 \rightarrow u_6 \rightarrow u_1$, which is impossible.
\newline
\newline
{\bf Case 3:} $\mathbf{N^+(u_1) = N^+(v_1)}$

It is easy to see by 2-geodecity that $V(G) = \{ u,u_1,u_2,u_3,u_4,u_5,u_6,v,v_1\} $, $O(u) = \{ v,v_1\} $ and $O(v) = \{ u,u_1\} $.  As $u_1,v_1 \not \in T(u_2)$, we have $O(u_2) = \{ u_3,u_4\} $ and $N^2(u_2) = \{ u,u_1,v,v_1\} $.  Without loss of generality, $N^+(u_5) = \{ u,v_1\} , N^+(u_6) = \{ v,u_1\} $.  $u$ and $v$ have in-neighbours apart from $u_5$ and $u_6$ respectively, so without loss of generality $u_3 \rightarrow u, u_4 \rightarrow v$.  Likewise, $u_5$ and $u_6$ have in-neighbours other than $u_2$, so, as $u_5 \rightarrow u$ and $u_6 \rightarrow v$, we must have $N^+(u_3) = \{ u,u_6\} , N^+(u_4) = \{ v,u_5\} $.  But now we have paths $u_3 \rightarrow u \rightarrow u_1$ and $u_3 \rightarrow u_6 \rightarrow u_1$, violating 2-geodecity.    

\begin{corollary}
There is a unique $(2,2,+2)$-digraph containing no bad pairs.
\end{corollary}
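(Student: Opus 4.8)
The plan is to extract the corollary directly from the case analysis carried out immediately above. We have fixed a pair $(u,v)$ sharing a single out-neighbour $u_2=v_2$ and assumed that every such pair in $G$ is good; by Corollary \ref{k equals 2 v or v1 an outlier} together with the definition of a good pair this already forces $v_1\in O(u)$ and $u_1\in O(v)$. Everything then hinges on the placement of the two out-neighbours $v_3,v_4$ of $v_1$: since $v_1\in O(u)$, $2$-geodecity sharply limits where each of $v_3,v_4$ can sit relative to $T_2(u)$, and a short case distinction collapses the possibilities (up to relabelling) to the four configurations labelled Cases 1--4, with Case 4 isomorphic to Case 1.a.

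First I would confirm that Cases 1--3 are genuinely exhaustive -- that no placement of $v_3,v_4$ escapes the enumeration -- since the whole corollary rests on this. Then I would run the cases in turn, at each step propagating three constraints: $2$-geodecity (no two distinct $\leq 2$-paths between a pair of vertices), diregularity (each vertex is an outlier of exactly two vertices), and, crucially, the standing assumption that every pair of vertices with exactly one common out-neighbour is good, invoking Lemma \ref{identical neighbourhoods lemma} whenever two vertices are forced to share both out-neighbours. In Cases 1.a, 1.b, 2.b, 2.c.i, 2.c.ii, 2.d and 3 these constraints clash and the configuration dies; in Cases 2.a and 2.c.iii they instead determine every remaining arc uniquely, and in both cases the resulting digraph is isomorphic to the one displayed in Figure \ref{fig:firstgooddigraph}.

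Collecting this, any $(2,2,+2)$-digraph in which every pair with a single common out-neighbour is good must be isomorphic to the digraph of Figure \ref{fig:firstgooddigraph}. To finish the corollary I would then check the converse, namely that this digraph is itself a legitimate $(2,2,+2)$-digraph containing no bad pair: one verifies directly that it is diregular of degree $2$, is $2$-geodetic of order $9$, and that for each of its pairs of vertices with a unique common out-neighbour the defining conditions of a good pair hold. Since it is moreover non-isomorphic to the bad-pair digraph of Figure \ref{fig:badpairdigraph} (it contains vertices with identical out-neighbourhoods, which that digraph does not), this establishes both existence and uniqueness.

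The main obstacle is the bookkeeping in the case analysis rather than any single conceptual difficulty: the delicate point is to apply the good-pair hypothesis to \emph{every} new pair with a single common out-neighbour that emerges as arcs are forced, since overlooking even one such pair would leave a spurious configuration standing. Ensuring the initial split into Cases 1--4 is exhaustive, and that Case 4 really does reduce to Case 1.a after relabelling, are the two places where an argument could silently lose a digraph, so I would treat those with particular care.
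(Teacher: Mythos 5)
Your proposal is correct and follows essentially the same route as the paper: the corollary is read off from the preceding case analysis (Cases 1--3, with Case 4 folded into Case 1.a), where only Cases 2.a and 2.c.iii survive and both yield the digraph of Figure \ref{fig:firstgooddigraph}. Your added step of verifying that this digraph genuinely is a $2$-geodetic, diregular digraph of order $9$ with no bad pair is a sensible piece of extra care that the paper handles only implicitly (via the non-isomorphism remark and the cited computational check of Erskine), but it does not change the argument's structure.
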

This completes our analysis of diregular $(2,2,+2)$-digraphs.  As it was shown in \cite{Tui} that there are no non-diregular $(2,2,+2)$-digraphs, $(2,2,+2)$-digraphs are now classified up to isomorphism.  These conclusions have been verified computationally by Erskine \cite{Ers}.  It is interesting to note that neither of the $(2,2,+2)$-digraphs are vertex-transitive, for in each case there are exactly three vertices contained in two 3-cycles.  However, there does exist a Cayley $(2,2,+5)$-digraph (on the alternating group $A_4$), so it would be interesting to determine the smallest vertex-transitive $(2,2,+\epsilon )$-digraphs.

\section{Main result}

We can now complete our analysis by showing that there are no diregular $(2,k,+2)$-digraphs for $k \geq 3$.  Let $G$ be such a digraph.  By Lemma \ref{vertices with one common neighbour}, $G$ contains vertices $u$ and $v$ with a unique common out-neighbour.  In accordance with our vertex-labelling convention, we have the situation in Figure \ref{fig:kgeq3setup}.  A triangle based at a vertex $x$ represents the set $T(x)$.

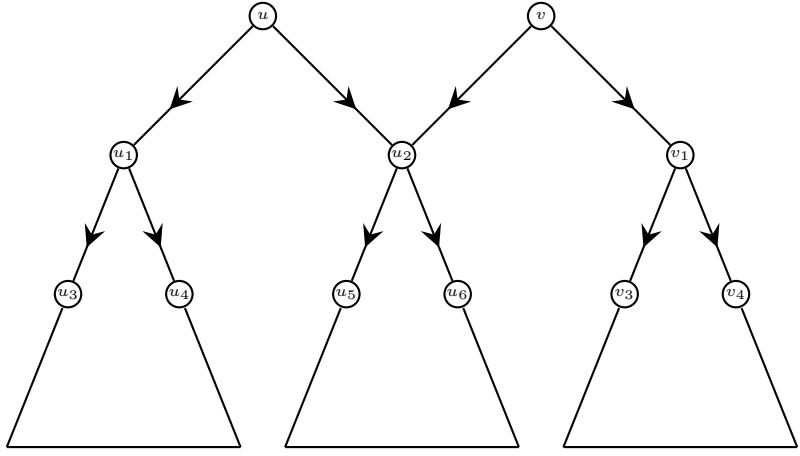
\begin{figure}\centering
\begin{tikzpicture}[middlearrow=stealth,x=0.2mm,y=-0.2mm,inner sep=0.1mm,scale=1.85,
	thick,vertex/.style={circle,draw,minimum size=10,font=\tiny,fill=white},edge label/.style={fill=white}]
	\tiny
	\node at (150,0) [vertex] (v0) {$u$};
	\node at (100,50) [vertex] (v1) {$u_1$};
	\node at (200,50) [vertex] (v2) {$u_2$};
	\node at (300,50) [vertex] (v3) {$v_1$};
	\node at (120,100) [vertex] (v4) {$u_4$};
           \node at (180,100) [vertex] (v5) {$u_5$};
	\node at (220,100) [vertex] (v6) {$u_6$};
	\node at (250,0) [vertex] (v7) {$v$};
          \node at (80,100) [vertex] (v9) {$u_3$};
           \node at (280,100) [vertex] (v10) {$v_3$};
          \node at (320,100) [vertex] (v11) {$v_4$};

	\path
		(v0) edge [middlearrow] (v1)
                     (v0) edge [middlearrow] (v2)
                     (v1) edge [middlearrow] (v9)
                     (v1) edge [middlearrow] (v4)
                     (v2) edge [middlearrow] (v5)
                     (v2) edge [middlearrow] (v6)
                     (v7) edge [middlearrow] (v2)
                     (v7) edge [middlearrow] (v3)
                     (v3) edge [middlearrow] (v10)
                     (v3) edge [middlearrow] (v11)
                    ;
                   \draw (78,105) -- (58,155);
                   \draw (122,105) -- (142,155);
                   \draw (178,105) -- (158,155);
                   \draw (222,105) -- (242,155);
                   \draw (278,105) -- (258,155);
                   \draw (322,105) -- (342,155);
                   \draw (58,155)--(142,155);
                   \draw (158,155)--(242,155);
                   \draw (258,155)--(342,155);

\end{tikzpicture}
\caption{Configuration for $k \geq 3$}
\label{fig:kgeq3setup}
\end{figure}

We now proceed to determine the possible outlier sets of $u$ and $v$.

\begin{lemma}\label{position of v}
$v \in N^{k-1}(u_1) \cup O(u)$ and $u \in N^{k-1}(v_1) \cup O(v)$.  If $v \in O(u)$, then $u_2 \in O(u_1)$ and if $u \in O(v)$, then $u_2 \in O(v_1)$.
\end{lemma}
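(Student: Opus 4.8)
The plan is to establish the four assertions of Lemma \ref{position of v} in two groups: the two "position" statements locating $v$ and $u$, and the two conditional statements about $u_2$ being an outlier. Throughout I would lean on two standing facts: $k$-geodecity in the form "there are no two distinct $\leq k$-walks with the same endpoints (including no $\leq k$-cycles)", and the observation that, since $G$ is diregular of degree $2$ and both $u$ and $v$ send an arc to the common out-neighbour $u_2 = v_2$, we have $N^-(u_2) = \{u,v\}$. This last fact is the conceptual hinge of the argument.

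For the first statement I would first record two unconditional facts. Since $v \rightarrow u_2$, a path of length $\leq k-1$ from $u_2$ to $v$ would close into a $\leq k$-cycle, so $v \notin T(u_2)$ (recall $T(u_2) = T_{k-1}(u_2)$), i.e.\ $d(u_2,v) \geq k$; and $v \neq u$ by construction. Now suppose $v \notin O(u)$, so $d(u,v) \leq k$. Writing $d(u,v) = \min\bigl(1 + d(u_1,v),\, 1 + d(u_2,v)\bigr)$ and using $d(u_2,v) \geq k$, the minimum must be attained through $u_1$, whence $d(u_1,v) = d(u,v) - 1$. If this were $< k-1$, then appending the arc $v \rightarrow u_2$ to a shortest $u$–$v$ path would produce a $u$–$u_2$ walk of length $\leq k$ and length $\geq 2$, distinct from the arc $u \rightarrow u_2$, contradicting $k$-geodecity. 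Hence $d(u_1,v) = k-1$, i.e.\ $v \in N^{k-1}(u_1)$. The companion statement follows by interchanging the roles of $(u,u_1)$ and $(v,v_1)$.

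For the second statement I would argue the contrapositive: assuming $u_2 \notin O(u_1)$, I show $v \notin O(u)$. The same walk-doubling argument used above shows $d(u_1,u_2) \geq k$ unconditionally (a shorter $u_1$–$u_2$ path, prefixed by $u \rightarrow u_1$, would give a second $\leq k$-walk $u \rightsquigarrow u_2$), so $u_2 \notin O(u_1)$ forces $d(u_1,u_2) = k$. I then take a shortest $u_1$–$u_2$ path and look at its penultimate vertex $w$, which satisfies $w \rightarrow u_2$ and $d(u_1,w) = k-1$. Because $N^-(u_2) = \{u,v\}$, we must have $w \in \{u,v\}$; but $w = u$ would give $d(u_1,u) = k-1$ and hence the $k$-cycle $u \rightarrow u_1 \rightarrow \cdots \rightarrow u$, which is impossible. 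Therefore $w = v$ with $d(u_1,v) = k-1$, so $v \in T_k(u)$ and $v \notin O(u)$. Symmetry again delivers the implication for $u \in O(v)$.

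The step I would be most careful with is the bookkeeping that distinguishes distance exactly $k$ (membership in $N^k$) from distance $\geq k+1$ (membership in $O$): each estimate must be sharp enough to place $v$, respectively $u_2$, in the correct one of $N^{k-1}(u_1)$, $N^k(u_1)$, or the outlier set. Beyond that, there is no real obstacle: the whole argument rests on the identification $N^-(u_2) = \{u,v\}$, which pins down the penultimate vertex of any shortest path ending at $u_2$, after which both implications collapse to the elementary "two $\leq k$-walks between the same endpoints" contradiction.
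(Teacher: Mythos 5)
Your proposal is correct and follows essentially the same route as the paper: both rest on $v \notin T(u_2)$ via the $\leq k$-cycle argument, the sharpness step forcing $d(u_1,v)=k-1$ (your ``append the arc $v \rightarrow u_2$'' move is exactly the paper's ``$u_2$ would be repeated in $T_k(u)$''), and the diregularity fact $N^-(u_2)=\{u,v\}$ combined with the penultimate-vertex observation for the outlier implication. The only difference is presentational: you prove the second assertion by contrapositive, while the paper argues directly that neither in-neighbour of $u_2$ lies in $T(u_1)$.
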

\begin{proof}
$v$ cannot lie in $T(u)$, or the vertex $u_2$ would be repeated in $T_k(u)$.  Also, $v \not \in T(u_2)$, or there would be a $\leq k$-cycle through $v$.  Therefore, if $v \not \in O(u)$, then $v \in N^{k-1}(u_1)$.  Likewise for the other result.  If $v \in O(u)$, then neither in-neighbour of $u_2$ lies in $T(u_1)$, so that $u_2 \in O(u_1)$.
\end{proof}

\begin{lemma}\label{contraction lemma}
Let $w \in T(v_1)$, with $d(v_1,w) = l$.  Suppose that $w \in T(u_1)$, with $d(u_1,w) = m$.  Then either $m \leq l$ or $w \in N^{k-1}(u_1)$.  A similar result holds for $w \in T(u_1)$.
\end{lemma}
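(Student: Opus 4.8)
The plan is to argue by contradiction: assume $l < m$ and $m \leq k-2$ (so that $w \notin N^{k-1}(u_1)$), and derive a contradiction. First I would record the regularity consequences of the hypotheses that make the tree pictures in Figure \ref{fig:kgeq3setup} rigid: since $G$ is a diregular $(2,k,+2)$-digraph, for every vertex $x$ the sets $N^i(x)$ have exactly $2^i$ elements for $0 \leq i \leq k$, so the reachability structure out of $x$ is a genuine binary tree of depth $k$; in particular $T(u_1)$ and $T(u_2)$ are the two disjoint depth-$(k-1)$ subtrees hanging below $u$, and likewise $T(v_1)$ and $T(u_2)$ below $v$. I would also note the girth bound (every cycle has length $\geq k+1$, as a shorter cycle would give two $\leq k$-paths from some vertex to itself). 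From $w \in T(u_1)$ and $m \leq k-2$ we get $d(u,w) = m+1 \leq k-1$ with the unique $\leq k$-path running through $u_1$; similarly $d(v,w) = l+1 \leq k-2$ through $v_1$. Combining these with the disjointness of the subtrees yields the basic fact that any vertex lying in $T(u_1)$ or in $T(v_1)$ is at distance $\geq k$ from $u_2$; in particular $d(u_2,w) \geq k$.

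The main engine is an induction on $l$ driven by the predecessor of $w$ along a $v_1$-geodesic. Write $q \to w$ with $d(v_1,q) = l-1$ (for $l \geq 1$). If $q \in T(u_1)$, say $d(u_1,q) = m'$, then the inductive hypothesis applied to $q$ (whose $v_1$-distance $l-1$ is smaller) gives either $m' \leq l-1$ or $m' = k-1$. In the first case $d(u_1,w) \leq m'+1 \leq l < m$, contradicting $d(u_1,w)=m$. In the second case $u_1$ reaches $w$ both along its geodesic of length $m \leq k-2$ and along the path $u_1 \leadsto q \to w$ of length $k$; the girth bound guarantees the latter walk is a path and it is distinct from the geodesic, so the two $\leq k$-paths from $u_1$ to $w$ violate $k$-geodecity. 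Thus the inductive step closes cleanly whenever $q \in T(u_1)$. A symmetric argument, run on the predecessor $p \to w$ along a $u_1$-geodesic ($d(u_1,p)=m-1$), shows that if $p \in T(v_1)$ one again produces two distinct $\leq k$-paths, this time from $v_1$ to $w$.

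The crux -- and the step I expect to be the main obstacle -- is the remaining scenario in which these predecessors \emph{escape} the relevant subtrees, i.e. $q \notin T(u_1)$ and $p \notin T(v_1)$. Here I would first show that escape is forced into the outlier sets: since $q \to w$ with $w \in T(u)$ reached only through $u_1$, $k$-geodecity gives $d(u,q) \geq k$, and because $q \in T(v_1)$ already forces $d(u_2,q) \geq k$, the value $d(u,q)=k$ (which would have to be realised through $u_2$) is impossible; hence $q \in O(u)$, and symmetrically $p \in O(v)$. The contradiction must then come from the fact that $|O(u)| = |O(v)| = 2$ together with Lemma \ref{position of v} (which locates $v$ in $N^{k-1}(u_1) \cup O(u)$ and $u$ in $N^{k-1}(v_1) \cup O(v)$) and the decomposition identity $\{u\} \sqcup T(u_1) \sqcup O(u) = \{v\} \sqcup T(v_1) \sqcup O(v)$, both sides being $V(G) \setminus T(u_2)$; the plan is to show that the escaped predecessors, their out-neighbours, and $w$ itself cannot all be accommodated in these two-element outlier sets without producing a third outlier of $u_2$ (recall $d(u_2,w) \geq k+1$, so $w \in O(u_2)$) or a forbidden short cycle. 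The base case $l = 0$, where $w = v_1$ and one must prove directly that $v_1 \in T(u_1)$ implies $v_1 \in N^{k-1}(u_1)$, is handled by the same escape analysis applied to the in-neighbours of $v_1$, together with the girth bound. I would finally transcribe the whole argument with $u$ and $v$ interchanged to obtain the asserted symmetric statement for $w \in T(u_1)$.
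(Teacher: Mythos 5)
Your proof is incomplete at precisely the point you yourself flag as the main obstacle, and the gap is genuine. The reduction is fine: if the predecessor $q$ of $w$ on the $v_1$-geodesic lies in $T(u_1)$, or the predecessor $p$ on the $u_1$-geodesic lies in $T(v_1)$, your two-paths arguments work, and your deduction that otherwise $q \in O(u)$ and $p \in O(v)$ is also correct. But for that remaining ``escape'' case you offer only a plan, not an argument, and the plan as stated does not close: the facts you intend to combine --- $q \in O(u)$, $p \in O(v)$, $w \in O(u_2)$, the outlier sets having size two, Lemma \ref{position of v}, and the decomposition $\{ u\} \cup T(u_1) \cup O(u) = \{ v\} \cup T(v_1) \cup O(v)$ --- are mutually consistent, since the three outliers you locate lie in three \emph{different} outlier sets, each of which still has room for its second element (even adding $v \in O(u)$ and $u \in O(v)$ via Lemma \ref{position of v} causes no clash). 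You never exhibit the promised ``third outlier of $u_2$'' or forbidden cycle. There is also a local error: you assert $d(u_2,w) \geq k+1$, but your basic fact only gives $d(u_2,w) \geq k$; this is repairable in the escape case (both in-neighbours $p,q$ of $w$ lie in $T(u_1) \cup T(v_1)$, which is disjoint from $T(u_2)$, so no $u_2$-geodesic of length $k$ can end at $w$), but you do not do it. The base case $l = 0$ is left in the same state: the escape analysis there only places the second in-neighbour of $v_1$ in $O(v) \cap T(u_1)$, which again is not by itself a contradiction.

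The obstruction is structural: looking backward at predecessors and at two-element outlier sets involves only a bounded number of vertices, whereas the contradiction really comes from exponential growth \emph{forward} of $w$. That is exactly the paper's proof, which needs no induction and no case analysis: assume $m > l$ and consider $N^{k-m}(w)$. Since $w \in N^m(u_1)$, $k$-geodecity gives $N^{k-m}(w) \subseteq N^k(u_1)$, hence $N^{k-m}(w) \cap T(u_1) = \varnothing$; since $l + (k-m) \leq k-1$, we have $N^{k-m}(w) \subseteq T(v_1)$, so $N^{k-m}(w)$ is also disjoint from $T(v_2) = T(u_2)$. As $V(G) = \{ u\} \cup T(u_1) \cup T(u_2) \cup O(u)$, this forces $N^{k-m}(w) \subseteq \{ u\} \cup O(u)$, whence $2^{k-m} \leq 3$ and $m = k-1$. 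If you want to keep your framework, the correct repair of the escape case is to replace the outlier bookkeeping by this forward count (the inclusion $N^+(w) \subseteq N^{m+1}(u_1) \cap N^{l+1}(v_1)$, proved by the same two-paths reasoning you already use, iterates to give the inclusions above); but once you have that count, the induction and the case split are superfluous.
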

\begin{proof}
Let $w$ be as described and suppose that $m > l$.  Consider the set $N^{k-m}(w)$.  By construction, $N^{k-m}(w) \subseteq N^k(u_1)$, so by $k$-geodecity $N^{k-m}(w) \cap T(u_1) = \varnothing $.  At the same time, we have $l+k-m \leq k-1$, so $N^{k-m}(w) \subseteq T(v_1)$.  This implies that $N^{k-m}(w) \cap T(v_2) = N^{k-m}(w) \cap T(u_2) = \varnothing $.  As $V(G) = \{ u\} \cup T(u_1) \cup T(u_2) \cup O(u)$, it follows that $N^{k-m}(w) \subseteq \{ u\} \cup O(u)$.  Therefore $|N^{k-m}(w)| = 2^{k-m} \leq 3$.  By assumption $0 \leq m \leq k-1$, so it follows that $m = k-1$.
\end{proof}

\begin{corollary}\label{contraction corollary}
If $w \in T(v_1)$, then either $w \in \{ u\} \cup O(u)$ or $w \in T(u_1)$ with $d(u_1,w) = k-1$ or $d(u_1,w) \leq d(v_1,w)$. 
\end{corollary}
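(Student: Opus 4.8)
The plan is to combine the partition of the vertex set with the contraction lemma just established. Recall that, since $N^+(u) = \{u_1,u_2\}$ and $G$ is $k$-geodetic, every vertex other than $u$ lying within distance $k$ of $u$ is reached from $u$ by a unique shortest path through exactly one of $u_1,u_2$; this gives the disjoint decomposition $V(G) = \{u\} \cup T(u_1) \cup T(u_2) \cup O(u)$ already used in the proof of Lemma \ref{contraction lemma}. Given $w \in T(v_1)$, I would argue according to which block of this partition contains $w$.

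If $w \in \{u\} \cup O(u)$, then we are immediately in the first case of the statement and nothing further is needed. The key step is to rule out $w \in T(u_2)$. For this I would exploit the labelling convention $u_2 = v_2$: then $T(u_2) = T(v_2)$, and since $v_1,v_2$ are the two distinct out-neighbours of $v$, $k$-geodecity applied to $v$ forces $T(v_1) \cap T(v_2) = \varnothing$. Indeed, a vertex lying in both sets would be joined to $v$ by two distinct $\leq k$-paths, one through $v_1$ and one through $v_2$. Hence $w \in T(v_1)$ excludes the possibility $w \in T(u_2)$.

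The only remaining case is $w \in T(u_1)$. Here I would set $l = d(v_1,w)$ and $m = d(u_1,w)$ and apply Lemma \ref{contraction lemma} directly: it yields either $m \leq l$, which is the third alternative $d(u_1,w) \leq d(v_1,w)$, or $w \in N^{k-1}(u_1)$, which is the second alternative $d(u_1,w) = k-1$. Together with the two easy cases above, this exhausts all possibilities and establishes the corollary.

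Since the substantive work is already contained in Lemma \ref{contraction lemma}, I do not anticipate a serious obstacle. The one point requiring genuine care is the disjointness of $T(v_1)$ and $T(v_2)$, which is precisely what renders the block $T(u_2)$ unreachable for $w$ and thereby collapses the corollary into a clean three-way case split; this is exactly the step where the hypothesis $u_2 = v_2$ is essential.
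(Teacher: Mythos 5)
Your proposal is correct and is essentially the paper's own argument: the paper's proof reads simply ``By $k$-geodecity and Lemma \ref{contraction lemma},'' and the content it compresses is exactly what you spell out --- the decomposition $V(G) = \{u\} \cup T(u_1) \cup T(u_2) \cup O(u)$, the exclusion of $w \in T(u_2) = T(v_2)$ via $k$-geodecity at $v$, and the application of Lemma \ref{contraction lemma} when $w \in T(u_1)$. Your elaboration of the $k$-geodecity step (disjointness of $T(v_1)$ and $T(v_2)$) is precisely the point the paper leaves implicit.
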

\begin{proof}
By $k$-geodecity and Lemma \ref{contraction lemma}.
\end{proof}

\begin{corollary}\label{position of v1}
$v_1 \in N^{k-1}(u_1) \cup O(u)$ and $u_1 \in N^{k-1}(v_1) \cup O(v)$.
\end{corollary}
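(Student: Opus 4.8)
The plan is to read the conclusion directly off Corollary~\ref{contraction corollary} applied to the vertex $w = v_1$, and then to discard two degenerate coincidences by hand. First I would observe that $v_1 \in T(v_1)$ trivially, with $d(v_1,v_1) = 0 \leq k-1$, so Corollary~\ref{contraction corollary} applies with $w = v_1$. It yields exactly the following possibilities: $v_1 \in \{ u\} \cup O(u)$, or $v_1 \in T(u_1)$ with $d(u_1,v_1) = k-1$ (that is, $v_1 \in N^{k-1}(u_1)$), or $d(u_1,v_1) \leq d(v_1,v_1) = 0$, which forces $v_1 = u_1$. Thus, apart from the two coincidences $v_1 = u$ and $v_1 = u_1$, every case is already of the required form $N^{k-1}(u_1) \cup O(u)$, so it remains only to exclude those two.

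Next I would rule out $v_1 = u_1$. Since $u_2 = v_2$ by our labelling convention, the equality $v_1 = u_1$ would give $N^+(v) = \{ v_1,v_2\} = \{ u_1,u_2\} = N^+(u)$, contradicting the defining property $|N^+(u) \cap N^+(v)| = 1$ of the pair $(u,v)$ furnished by Lemma~\ref{vertices with one common neighbour}. Then I would rule out $v_1 = u$ using $k$-geodecity: if $v_1 = u$, then $v \rightarrow u \rightarrow u_2$ is a $2$-path from $v$ to the common out-neighbour $u_2$, whereas $v \rightarrow u_2$ is a $1$-path to the same vertex. For $k \geq 2$ these are two distinct $\leq k$-paths from $v$ to $u_2$, which is impossible. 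Hence $v_1 \in N^{k-1}(u_1) \cup O(u)$.

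The companion statement $u_1 \in N^{k-1}(v_1) \cup O(v)$ follows verbatim after interchanging the roles of $u$ and $v$ (and correspondingly of $u_1$ with $v_1$, using $v_2 = u_2$), since the whole configuration is symmetric in the pair $(u,v)$.

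Because the argument is a direct specialisation of Corollary~\ref{contraction corollary}, there is no substantive obstacle here; the only point requiring care is to ensure that the two off-diagonal coincidences $v_1 = u$ and $v_1 = u_1$, both permitted by the raw trichotomy of the corollary, are genuinely excluded — one by the single-common-out-neighbour hypothesis and the other by $k$-geodecity.
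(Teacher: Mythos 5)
Your proposal is correct and matches the paper's own proof essentially step for step: the paper likewise specialises Corollary~\ref{contraction corollary} to $w = v_1$, obtaining $v_1 \in \{u\} \cup O(u) \cup \{u_1\} \cup N^{k-1}(u_1)$, and then discards $v_1 = u$ by $k$-geodecity and $v_1 = u_1$ by construction. Your write-up merely spells out these two exclusions (the $N^+(u) = N^+(v)$ contradiction and the two $\leq k$-paths from $v$ to $u_2$) in more detail than the paper does, which is fine.
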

\begin{proof}
We prove the first inclusion.  By Corollary \ref{contraction corollary}, $v_1 \in \{ u\} \cup O(u) \cup \{ u_1\} \cup N^{k-1}(u_1)$.  By $k$-geodecity, $v_1 \not = u$ and by construction, $v_1 \not = u_1$.
\end{proof}

We now have enough information to identify one member of $O(u)$ and $O(v)$.

\begin{lemma}\label{v1 in Ou}
$v_1 \in O(u)$ and $u_1 \in O(v)$.
\end{lemma}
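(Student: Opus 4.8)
The plan is to assume the contrary and derive a contradiction; since the configuration is symmetric under the swap $(u,u_1)\leftrightarrow(v,v_1)$ with $u_2$ held fixed, it suffices to prove $v_1\in O(u)$, after which $u_1\in O(v)$ follows by symmetry. By Corollary~\ref{position of v1} we have $v_1\in N^{k-1}(u_1)\cup O(u)$, so suppose for contradiction that $v_1\in N^{k-1}(u_1)$. First I would pin down the coarse structure. Since $v\to v_1$, any $\leq k$-path $u_1\rightsquigarrow v$ of length $k-1$ would, on appending the arc $v\to v_1$, produce a second $\leq k$-path from $u_1$ to $v_1$; hence $v\notin N^{k-1}(u_1)$, so Lemma~\ref{position of v} gives $v\in O(u)$ and $u_2\in O(u_1)$. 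As $G$ is diregular, the only in-neighbours of $u_2$ are $u$ and $v$, so $u_2\in O(u_1)$ forces $\min\{d(u_1,u),d(u_1,v)\}\geq k$, that is, $u,v\notin T(u_1)$. The same two-path argument applied at $v_1$ shows that at most one of $u,u_1$ can lie in $N^{k-1}(v_1)$.

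Writing $O(u)=\{v,x\}$, Corollary~\ref{contraction corollary} confines the ball $T(v_1)$ to $\{u\}\cup O(u)\cup T(u_1)$, and since $v$ lies on no $\leq k$-cycle we get $T(v_1)\subseteq T(u_1)\cup\{u,x\}$. The central tool I would isolate is an arc-structure observation: for any vertex $r$, $k$-geodecity forces every arc with both ends in the ball $T_{k-1}(r)$ to descend exactly one BFS-level, so the internal arcs of $T_{k-1}(r)$ form the out-arborescence rooted at $r$, and $r$ is the \emph{unique} vertex of the ball with no in-neighbour inside it. Applying this to $T(v_1)$, its root $v_1$ has no internal in-neighbour; but the penultimate vertex $p_{k-2}$ of the shortest path $u_1\rightsquigarrow v_1$ satisfies $p_{k-2}\to v_1$ with $p_{k-2}\in T(u_1)$, so $p_{k-2}\in T(u_1)\setminus T(v_1)$. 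As $|T(v_1)|=|T(u_1)|=2^k-1$, this shows $T(v_1)\neq T(u_1)$, so $T(v_1)$ must contain $u$ or $x$, and exactly $|T(v_1)\cap\{u,x\}|$ vertices of $T(u_1)$ are absent from $T(v_1)$.

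It then remains to rule out the two surviving configurations, distinguished by whether $u_1\in T(v_1)$ or $u\in T(v_1)$ (these are exclusive and exhaustive, by the at-most-one principle and the fact that $p_{k-2}\notin T(v_1)$ forces at least one of $u,x$ into $T(v_1)$). A second tool here is a short-path principle: if $w\in N^{k-1}(v_1)$ is a leaf of the arborescence of $T(v_1)$ and some out-neighbour of $w$ lies in $T(v_1)$, then that out-neighbour is reached from $v_1$ both by a path of length $\leq k-1$ and by the length-$k$ path through $w$, violating $k$-geodecity. If $u_1\in T(v_1)$, the symmetric form of Lemma~\ref{contraction lemma} gives $u_1\in N^{k-1}(v_1)$, whence the short-path principle ejects the out-neighbours $u_3,u_4$ of $u_1$ from $T(v_1)$; then $\{u_3,u_4\}\subseteq T(u_1)\setminus T(v_1)$, a set of size $1$ in this case --- a contradiction. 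If instead $u\in T(v_1)$, then appending $u\to u_2$ to a short path $v\to v_1\rightsquigarrow u$ and comparing with the arc $v\to u_2$ forces $d(v_1,u)=k-1$, and the at-most-one principle gives $u_1\notin T(v_1)$; the two vertices of $T(u_1)$ missing from $T(v_1)$ are therefore exactly $u_1$ and $p_{k-2}$. Tracing arborescence parents, the $T(u_1)$-children of these two missing vertices --- namely $u_3,u_4$ and the sibling $v_1'$ of $v_1$ (the other out-neighbour of $p_{k-2}$) --- must each inherit as their $T(v_1)$-parent their unique in-neighbour lying outside $T(u_1)$; this parent cannot be $u$ (whose out-neighbours are only $u_1,u_2$), so it is $x$ in all three cases, forcing $x$ to have out-degree at least three.

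The main obstacle is precisely this bookkeeping of which vertices of $T(u_1)$ fail to appear in $T(v_1)$ and which arborescence-parents they inherit; the argument is cleanest when the two missing vertices sit at well-separated BFS levels. The one genuinely exceptional case is $k=3$, where $p_{k-2}=p_1$ coincides with an out-neighbour of $u_1$, so that only two (not three) forced arcs emanate from $x$ and the degree count merely yields $N^+(x)=\{u_4,v_1'\}$. Here I would finish by writing out the remaining handful of arcs explicitly and exhibiting two distinct $\leq 3$-paths from $u_1$ to a common descendant --- one running directly through $u_4$ and one through $u_3$ and $v_1$ --- which contradicts $3$-geodecity and closes the final case.
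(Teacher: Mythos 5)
Your proof is correct, but it follows a genuinely different and considerably heavier route than the paper's. The two arguments agree at the start: assuming $v_1 \in N^{k-1}(u_1)$, the arc $v \rightarrow v_1$ would make $v_1$ appear twice in $T_k(u_1)$ if also $v \in N^{k-1}(u_1)$, so Lemma~\ref{position of v} forces $v \in O(u)$. After that the paper stays local: it runs the same ``appears twice in $T_k(u_1)$'' trick on the two out-neighbours $v_3,v_4$ of $v_1$, showing that each of $\{v_1,v_3\}$ and $\{v_1,v_4\}$ must meet $O(u)$, so that $v_1 \notin O(u)$ would force the three distinct vertices $v,v_3,v_4$ into the two-element set $O(u)$ --- pigeonhole finishes the proof in a few lines, uniformly in $k$. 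You instead compare the whole balls: Corollary~\ref{contraction corollary} confines $T(v_1)$ to $T(u_1)\cup\{u,x\}$, the arborescence/level structure of balls in a $k$-geodetic digraph identifies which vertices of $T(u_1)$ must be missing from $T(v_1)$, and parent-tracing forces out-arcs at the second outlier $x$, giving an out-degree contradiction for $k \geq 4$ and a geodecity contradiction for $k=3$. I checked the pieces: the arc-structure observation, the exclusivity and exhaustiveness of your two cases, and the contradictions in Case~A and in Case~B for $k \geq 4$ are all sound; and your sketched $k=3$ endgame does close, since once $N^+(x)=\{u_4,v_1'\}$ is forced, neither $u$ (already in $N^2(v_1)$), nor $v_1'$ (which would give two paths from $u_3$), nor two copies of $x$ can fill $N^+(v_1)$, so some out-neighbour of $v_1$ lies in $\{u_4\}\cup N^+(u_4)$, producing exactly the two $\leq 3$-paths from $u_1$ you describe. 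What the paper's approach buys is brevity and uniformity --- no case split, no special treatment of $k=3$, no ball bookkeeping --- because it only tracks $v$, $v_3$ and $v_4$ rather than all of $T(v_1)$. What yours buys is finer structural information (exactly which vertices of $T(u_1)$ are absent from $T(v_1)$, and the forced arcs at $x$), which is the kind of analysis the paper postpones until Corollary~\ref{outlier sets of u and u1} and the proof of the final theorem.
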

\begin{proof}
We prove that $v_1 \in O(u)$.  Suppose that neither $v_1$ nor $v$ lies in $O(u)$.  Then by Lemma \ref{position of v} and Corollary \ref{position of v1} we have $v, v_1 \in N^{k-1}(u_1)$.  As $v_1$ is an out-neighbour of $v$, it follows that $v_1$ appears twice in $T_k(u_1)$, violating $k$-geodecity.  Therefore $O(u) \cap \{ v,v_1\} \not = \varnothing $.

Now assume that $v_1, v_3 \in T_k(u)$.  Again by Corollary \ref{position of v1}, $v_1 \in N^{k-1}(u_1)$.  By $k$-geodecity we also have $v_3 \in T(u_1)$.  However, $v_3 \in N^+(v_1)$, so $v_3$ appears twice in $T_k(u_1)$, which is impossible.  Hence $O(u) \cap \{ v_1,v_3\} \not = \varnothing $.  Similarly, $O(u) \cap \{ v_1,v_4\} \not = \varnothing $.  In the terminology of the previous section, $G$ contains no bad pairs.  Therefore, if $v_1 \not \in O(u)$, then $\{ v,v_3,v_4\} \subseteq O(u)$.  Since these vertices are distinct, this is a contradiction and the result follows.

\end{proof}

Lemma \ref{v1 in Ou} allows us to conclude that for vertices sufficiently close to $v_1$ one of the potential situations mentioned in Corollary \ref{contraction corollary} cannot occur.

\begin{lemma}\label{can't be at distance k-1}
$T_{k-3}(v_1) \cap N^{k-1}(u_1) = T_{k-3}(u_1) \cap N^{k-1}(v_1) = \varnothing $.
\end{lemma}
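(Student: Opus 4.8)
\emph{Proof plan.} The plan is to prove the first equality by contradiction and to obtain the second by interchanging the roles of $u$ and $v$ (note that $u_2 = v_2$, so after applying Lemma \ref{v1 in Ou} in both directions the two hypotheses are symmetric). So suppose there is a vertex $w \in T_{k-3}(v_1) \cap N^{k-1}(u_1)$, and write $l = d(v_1,w) \leq k-3$ and $d(u_1,w) = k-1$. The strategy is to pin down the two out-neighbours of $w$: I will show that they are forced to lie in the three-element set $\{ u\} \cup O(u)$ and then eliminate each candidate, leaving fewer than two possible out-neighbours, which is absurd since $G$ is out-regular of degree two.

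The first step locates $N^+(w)$ relative to the decomposition $V(G) = \{ u\} \cup T(u_1) \cup T(u_2) \cup O(u)$ into disjoint parts. Since $d(u_1,w)=k-1$, extending the unique geodesic from $u_1$ to $w$ by one arc shows $N^+(w) \subseteq N^k(u_1)$, so by $k$-geodecity $N^+(w) \cap T(u_1) = \varnothing $. On the other hand every out-neighbour of $w$ is at distance at most $l+1 \leq k-2$ from $v_1$, so $N^+(w) \subseteq T(v_1)$; as $T(v_1) \cap T(v_2) = \varnothing $ and $T(v_2) = T(u_2)$, this gives $N^+(w) \cap T(u_2) = \varnothing $. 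Combining the two exclusions yields $N^+(w) \subseteq \{ u\} \cup O(u)$, and by Lemma \ref{v1 in Ou} we may write $O(u) = \{ v_1,x\} $, so $N^+(w) \subseteq \{ u,v_1,x\} $.

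It then remains to rule out $v_1$ and $u$ as out-neighbours of $w$. The vertex $v_1$ is excluded immediately: an arc $w \rightarrow v_1$ would close the geodesic from $v_1$ to $w$ into a cycle of length $l+1 \leq k-2 < k$, contradicting $k$-geodecity. Ruling out $u$ is the delicate step, and I expect it to be the main obstacle. Suppose $w \rightarrow u$; then $d(v,u) \leq d(v,w)+1 \leq (l+1)+1 \leq k-1$, so $u \in T(v)$ and in particular $u \not \in O(v)$. By Lemma \ref{position of v} we then have $u \in N^{k-1}(v_1)$, i.e. $d(v_1,u) = k-1$. Now I use the decomposition of $V(G)$ with respect to $v$: since $u \rightarrow u_2$ forbids a $\leq k$-cycle through $u$, we have $u \not \in T(u_2) = T(v_2)$, and together with $u \not \in O(v)$ and $u \not = v$ this forces $u \in T(v_1)$. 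But a vertex of $T(v_1)$ has its $v$-geodesic routed through $v_1$, whence $d(v,u) = 1 + d(v_1,u) = k$, contradicting $d(v,u) \leq k-1$.

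Therefore neither $v_1$ nor $u$ is an out-neighbour of $w$, forcing $N^+(w) \subseteq \{ x\} $; this is impossible because $w$ has out-degree two. This contradiction establishes $T_{k-3}(v_1) \cap N^{k-1}(u_1) = \varnothing $, and the identical argument with $u$ and $v$ interchanged gives $T_{k-3}(u_1) \cap N^{k-1}(v_1) = \varnothing $.
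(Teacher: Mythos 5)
Your proof is correct and follows essentially the same route as the paper's: both confine $N^+(w)$ using the partition $V(G)=\{u\}\cup T(u_1)\cup T(u_2)\cup O(u)$, the observation that $N^+(w)\subseteq T(v_1)$ (so that $T(u_2)=T(v_2)$ is forbidden by $k$-geodecity), and Lemma \ref{v1 in Ou}. The only divergence is the endgame: the paper selects a single out-neighbour $w_1\in N^+(w)-O(u)$ and lands it in the impossible set $T(u_2)\cap T(v_1)$, whereas you exclude $u$ and $v_1$ as out-neighbours individually (making explicit, via Lemma \ref{position of v}, an exclusion the paper compresses into ``by $k$-geodecity'') and finish by counting against out-degree two.
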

\begin{proof}
Let $w \in T_{k-3}(v_1) \cap N^{k-1}(u_1)$.  Consider the position of the vertices of $N^+(w)$ in $T_k(u) \cup O(u)$.  As $v_1 \not \in N^+(w)$, it follows from Lemma \ref{v1 in Ou} that at most one of the vertices of $N^+(w)$ can be an outlier of $u$, so let us write $w_1 \in N^+(w) - O(u)$.  By $k$-geodecity, $w_1 \not \in T(u_1) \cup \{ u\} $.  Hence $w_1 \in T(u_2) = T(v_2)$.  However, $w_1$ also lies in $T(v_1)$, so this violates $k$-geodecity.  
\end{proof}

\begin{corollary}\label{scrunching}
There is at most one vertex in $T_{k-3}(v_1) - \{ v_1\} $ that does not lie in $T(u_1)$; for all other vertices $w \in T_{k-3}(v_1) - \{ v_1\} $, $d(u_1,w) = d(v_1,w)$.  A similar result for $T_{k-3}(u_1) - \{ u_1\} $ also holds.
\end{corollary}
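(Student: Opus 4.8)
The plan is to run each vertex $w \in T_{k-3}(v_1) - \{v_1\}$ through Corollary \ref{contraction corollary} and to use the distance bound $d(v_1,w) \leq k-3$ to eliminate the awkward middle alternative, so that the two surviving alternatives yield exactly the two assertions. Since $T_{k-3}(v_1) \subseteq T_{k-1}(v_1) = T(v_1)$, Corollary \ref{contraction corollary} applies to every such $w$ and offers three possibilities: $w \in \{u\} \cup O(u)$, or $d(u_1,w) = k-1$, or $w \in T(u_1)$ with $d(u_1,w) \leq d(v_1,w)$. The middle one places $w$ in $N^{k-1}(u_1)$, which is impossible for $w \in T_{k-3}(v_1)$ by Lemma \ref{can't be at distance k-1}; so only the first and third survive.

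For the ``at most one'' claim I would note that, by the disjoint decomposition $V(G) = \{u\} \cup T(u_1) \cup T(u_2) \cup O(u)$, a vertex of $T_{k-3}(v_1) - \{v_1\}$ fails to lie in $T(u_1)$ precisely when it falls under the first alternative, i.e.\ when it lies in $\{u\} \cup O(u)$. Now $\{u\} \cup O(u)$ has only three members, and two of them are unavailable: by Lemma \ref{v1 in Ou} one outlier of $u$ is $v_1$, which we have excluded, while $u$ itself does not lie in $T_{k-3}(v_1)$. For the latter I would invoke Lemma \ref{position of v}, which gives $u \in N^{k-1}(v_1) \cup O(v)$: if $u \in N^{k-1}(v_1)$ then $d(v_1,u) = k-1 > k-3$, and if $u \in O(v)$ then $u \notin T_k(v) \supseteq T(v_1) \supseteq T_{k-3}(v_1)$. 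Hence the only element of $\{u\} \cup O(u)$ that can appear is the second outlier of $u$, so at most one vertex of $T_{k-3}(v_1) - \{v_1\}$ lies outside $T(u_1)$.

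For every remaining vertex $w \in T_{k-3}(v_1) \cap T(u_1)$ the third alternative gives $d(u_1,w) \leq d(v_1,w)$, and it remains to upgrade this to equality. Here I would apply the symmetric form of Lemma \ref{contraction lemma}: since $w$ lies in both $T(u_1)$ and $T(v_1)$ and $d(v_1,w) \leq k-3 < k-1$ forbids $w \in N^{k-1}(v_1)$, that lemma yields $d(v_1,w) \leq d(u_1,w)$, and the two inequalities force $d(u_1,w) = d(v_1,w)$. The companion statement for $T_{k-3}(u_1) - \{u_1\}$ follows by interchanging the roles of $u_1,v_1$ and of $u,v$, this time using $u_1 \in O(v)$ from Lemma \ref{v1 in Ou} together with $v \in N^{k-1}(u_1) \cup O(u)$ from Lemma \ref{position of v} to rule out $v \in T_{k-3}(u_1)$.

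The genuinely new input, and the step I expect to be the crux, is the ``at most one'' count rather than the distance equality: the latter is just a two-sided squeeze of Lemma \ref{contraction lemma}, whereas the former hinges on confining the exceptional vertices to the three-element set $\{u\} \cup O(u)$ and then showing that two of those three members ($v_1$ directly, and $u$ via Lemma \ref{position of v}) cannot occur. Everything else is routine bookkeeping with the tree decompositions of $T_k(u)$ and $T_k(v)$.
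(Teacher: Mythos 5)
Your proof is correct and is essentially the argument the paper leaves implicit: the corollary is stated without proof precisely because it follows by running Corollary \ref{contraction corollary} over $T_{k-3}(v_1)-\{v_1\}$, killing the $N^{k-1}(u_1)$ alternative via Lemma \ref{can't be at distance k-1}, confining the exceptional vertices to $\{u\}\cup O(u)$ (of which $v_1$ is excluded by hypothesis and $u$ by Lemma \ref{position of v}), and obtaining the distance equality from the two-sided application of Lemma \ref{contraction lemma}. Your filling-in of the details, including the symmetric case, matches the intended derivation exactly.
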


\begin{lemma}\label{scrunching k = 3}
For $k = 3$, $N^+(u_1) \cap N^2(v_1) = N^+(v_1) \cap N^2(u_1) = \varnothing $.
\end{lemma}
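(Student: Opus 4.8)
The plan is to argue by contradiction and to exploit the symmetry of the configuration under exchanging $u$ and $v$. Since the two asserted equalities are interchanged by the swap $u \leftrightarrow v$ (both pairs share the out-neighbour $u_2 = v_2$, and by Lemma \ref{v1 in Ou} we have $v_1 \in O(u)$ and $u_1 \in O(v)$), it suffices to prove $N^+(u_1) \cap N^2(v_1) = \varnothing$. Suppose not, and pick $w \in N^+(u_1) \cap N^2(v_1)$; after relabelling I may take $w = u_3$, and write $O(u) = \{v_1,x\}$, $O(v) = \{u_1,y\}$. The first observation is that both out-neighbours of $w$ lie in $N^2(u_1)$: since $u_1 \to w$, an out-neighbour of $w$ at distance $< 2$ from $u_1$ would create either a $2$-cycle or two $\leq k$-paths from $u_1$, both impossible for $k=3$.

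Next I would pin down $N^+(w)$ exactly by placing each $w' \in N^+(w)$ in the partition $V(G) = \{v\} \cup T(v_1) \cup T(v_2) \cup O(v)$, where $T(v_i) = T_2(v_i)$. Three short $k$-geodecity arguments do the work: $w' \notin T(v_1)$, since the path $v_1 \rightsquigarrow w \to w'$ has length $3$ while any path realising $w' \in T_2(v_1)$ is a second $\leq 3$-path; $w' \notin T(v_2) = T(u_2)$, since $w' \in N^2(u_1) \subseteq T(u_1)$ would then yield two $\leq 3$-paths from $u$; and $w' \neq u_1$, lest $u_1 \to w \to u_1$ be a $2$-cycle. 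Hence each $w' \in \{v,y\}$, so $N^+(w) = \{v,y\}$. In particular $y \in N^2(u_1)$, and since $u \notin N^2(u_1)$ (a $3$-cycle would otherwise arise from $u \to u_1$), Lemma \ref{position of v} forces $u \in N^2(v_1)$.

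Finally I would locate the in-neighbour $v_3 \in N^+(v_1)$ of $w$ and reach a contradiction. Using that $v_3 \in N^+(v_1)$ cannot lie in $T(v_2) = T(u_2)$, and ruling out $v_3 \in \{u,u_1,u_3,u_4\}$ by the usual cycle and two-path arguments, the only surviving possibility inside $T_2(u_1)$ is $v_3 \in N^2(u_1)$; but then the symmetric form of the reduction above (exchanging $u$ and $v$) gives $N^+(v_3) = \{u,x\}$, contradicting $v_3 \to u_3$. Therefore $v_3 \in O(u)$, i.e. $v_3 = x$. Now I invoke $u \in N^2(v_1)$: if $u$ is reached from $v_1$ through $v_3$, then $N^+(v_3) = \{u_3,u\}$, and $v_3 \to u \to u_1 \to u_3$ together with $v_3 \to u_3$ are two $\leq 3$-paths; if instead $u$ is reached through $v_4$, then $v_4 \in N^2(u_1)$ (as $v_4 \notin O(u) = \{v_1,v_3\}$ and no $\leq 3$-cycle is allowed we get $v_4 \in N^3(u)$, while $v_4 \in N^+(v_1)$ excludes $v_4 \in T(u_2)$), so the symmetric reduction gives $N^+(v_4) = \{u,x\} = \{u,v_3\}$, whence $v_1 \to v_3$ and $v_1 \to v_4 \to v_3$ are two $\leq 3$-paths. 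Either way $k$-geodecity fails.

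The hard part is this last step. The three reductions that force $N^+(w) = \{v,y\}$ are routine applications of $k$-geodecity, but the closing contradiction is delicate because the configuration stays internally consistent until one notices that the in-neighbour $v_3$ must coincide with the second outlier $x$ of $u$ and then applies the symmetric reduction to a \emph{second} witness ($v_3$ or $v_4$). The main bookkeeping obstacle is keeping track of which vertex is playing the role of the witness in each application of the symmetric reduction; once that is organised, everything is forced by $k$-geodecity together with Lemmas \ref{v1 in Ou} and \ref{position of v}.
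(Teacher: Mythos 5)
Your proof is correct, and its first half coincides with the paper's own argument: the paper also starts from a witness (in the mirror orientation, $v_3 = u_7 \in N^+(v_1) \cap N^2(u_1)$) and applies what it calls the ``reasoning of Lemma~\ref{can't be at distance k-1}'' --- exactly your reduction --- to conclude $N^+(v_3) = \{u, v_8\}$ with $O(u) = \{v_1, v_8\}$, which is the mirror image of your $N^+(w) = \{v, y\}$, $O(v) = \{u_1, y\}$. The endgames, however, genuinely differ. The paper pins the far vertex down exactly ($v \notin O(u)$ and $v \notin N^+(u_3)$ force $v = u_9$), deduces $O(v) = \{u_1, u_3\}$, and finishes with a global counting step: the three still-unidentified vertices of $T(v_1)$ must satisfy $\{v_4, v_9, v_{10}\} = \{u_4, u_8, u_{10}\}$, the arc $u_4 \rightarrow v$ then forces $v_4 = u_8$, and $N^+(u_8) = \{u_4, u_{10}\}$ yields two $\leq 3$-paths from $u_1$ to $u_4$. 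You avoid counting altogether: you place $u$ only up to $N^2(v_1)$, show that the in-neighbour $v_3$ of the witness must itself be the second outlier $x$ of $u$ (one mirrored application of the reduction excludes $v_3 \in N^2(u_1)$), and then split according to whether $u$ is reached through $v_3$ or through $v_4$, each branch closed by a short two-path violation, the latter after a further application of the reduction to the new witness $v_4$. The paper's route is shorter once the explicit labels $u_7, \dots, u_{10}$, $v_7, \dots, v_{10}$ are set up, but it leans on the order being exactly $17$; your route trades that bookkeeping for a case split and shows that the reduction alone, applied to every vertex of $N^+(v_1)$ that threatens to land in $N^2(u_1)$, suffices. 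Both are sound proofs of the lemma.
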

\begin{proof}
Suppose that $v_3 = u_7$.  By the reasoning of Lemma \ref{can't be at distance k-1} we can set $u = v_7$ and $O(u) = \{ v_1,v_8\} $.  $v \not \in O(u)$ and by 3-geodecity $v \not \in N^+(u_3)$, so we can assume that $v = u_9$.  $u_3 \rightarrow v_3$ implies that $u_3 \not \in T(v_1)$, so $O(v) = \{ u_1,u_3\} $.  We must have $\{ u_4,u_8,u_{10}\} = \{ v_4,v_9,v_{10}\} $.  As $u_4 \rightarrow v$, it follows that $v_4 = u_8$ and hence $\{ u_4,u_{10}\} = \{ v_9,v_{10}\} $, which is impossible.  
\end{proof}

As $u_1$ is an outlier of $v$, neither $v_3$ nor $v_4$ can be equal to $u_1$.  It follows from Corollary \ref{scrunching} and Lemma \ref{scrunching k = 3} that either $N^+(u_1) = N^+(v_1)$ or $u_1$ and $v_1$ have a single common out-neighbour, with one vertex of $N^+(v_1)$ being an outlier of $u$.

\begin{lemma}\label{N2u not equal to N2v}
$N^2(u) \not = N^2(v)$
\end{lemma}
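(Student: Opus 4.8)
The plan is to assume $N^2(u)=N^2(v)$ and derive a contradiction. Since $u_2=v_2$, we have $N^2(u)=N^+(u_1)\cup N^+(u_2)$ and $N^2(v)=N^+(v_1)\cup N^+(u_2)$, each a set of four distinct vertices by $k$-geodecity; as both contain $N^+(u_2)$ while neither $N^+(u_1)$ nor $N^+(v_1)$ meets $N^+(u_2)$, the equality $N^2(u)=N^2(v)$ is equivalent to $N^+(u_1)=N^+(v_1)$. So it suffices to rule out $N^+(u_1)=N^+(v_1)=:\{u_3,u_4\}$.

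First I would pin down the nearby outlier sets. Lemma \ref{identical neighbourhoods lemma} applied to $u_1,v_1$ gives $u_3\in O(u_4)$, $u_4\in O(u_3)$ and a vertex $x$ with $O(u_1)=\{v_1,x\}$, $O(v_1)=\{u_1,x\}$. Lemma \ref{v1 in Ou} gives $v_1\in O(u)$, $u_1\in O(v)$, and Lemma \ref{position of v} puts $v\in N^{k-1}(u_1)\cup O(u)$. The clean step is that $v\in N^{k-1}(u_1)$ is impossible: it would force $v_1\in N^+(v)\subseteq N^{k}(u_1)$, i.e.\ $d(u_1,v_1)\le k$, contradicting $v_1\in O(u_1)$. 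Hence $v\in O(u)$, so $O(u)=\{v,v_1\}$, and the last clause of Lemma \ref{position of v} gives $u_2\in O(u_1)$, forcing $x=u_2$. Symmetrically $O(v)=\{u,u_1\}$, $O(u_1)=\{v_1,u_2\}$, $O(v_1)=\{u_1,u_2\}$.

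Next I would read off the rigid local structure. Since $u_1$ and $v_1$ both send arcs to each of $u_3,u_4$, diregularity yields $N^-(u_3)=N^-(u_4)=\{u_1,v_1\}$, while $N^-(u_2)=\{u,v\}$. Counting inside $T_k(u_2)$ then gives $O(u_2)=\{u_3,u_4\}$ (the two common out-neighbours of $u_1,v_1$ are exactly the vertices $u_2$ cannot reach) and shows that the leaf set $N^{k}(u_1)=N^{k-1}(u_3)\sqcup N^{k-1}(u_4)$ coincides with $\{u,v\}\cup\big(T(u_2)\setminus\{u_2\}\big)$; in particular $u,v$, the non-$u$ in-neighbour $p$ of $u_1$, and the non-$v$ in-neighbour $q$ of $v_1$ all lie at distance $k-1$ from $u_3$ or from $u_4$.

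The contradiction rests on the principle that if two in-neighbours of a vertex $z$ both lay in $N^{k-1}(u_3)$, then $u_3$ would reach $z$ by two distinct $\le k$-paths, violating $k$-geodecity (likewise with $u_4$). Applied to $z=u_2$ this forces $u,v$ into different subtrees; applied to $z=u_1$ and $z=v_1$ it forces $p,q$ into the subtrees opposite $u,v$. More generally it makes the $u_3/u_4$-label flip along every edge of the $u_2$-tree, parity-colouring each of the two branches hanging from $u_5,u_6$. Since $u$ and $v$ split between the subtrees, the number of leaves assigned to the $u_3$-subtree is odd whenever $u_5,u_6$ share a phase, contradicting the even value $2^{k-1}$; this eliminates both constant-phase colourings for every $k\ge 3$. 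The one surviving configuration is the alternating colouring, with $u_5,u_6$ in opposite subtrees, in which every elementary two-path obstruction is evaded — \textbf{this is the step I expect to be the main obstacle}. I would finish it by following the forced back-arcs from the deepest $u_2$-tree vertices (such as $p\to u_1$ and $q\to v_1$) up to their tree-ancestors and showing they must close into a cycle of length $\le k$, equivalently yielding a second $\le k$-path between some pair — the higher-$k$ analogue of the $u_3\to u\to u_1$, $u_3\to u_6\to u_1$ collision used in Case 3 of the $k=2$ classification.
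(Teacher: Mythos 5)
The first half of your argument is correct and is essentially the paper's own opening: the reduction of $N^2(u)=N^2(v)$ to $N^+(u_1)=N^+(v_1)$, the exclusion of $v\in N^{k-1}(u_1)$ (the paper argues via a $k$-cycle through $v$, you argue via $v_1\in O(u_1)$; both work), and the resulting determination $O(u)=\{v,v_1\}$, $O(v)=\{u,u_1\}$, $O(u_1)=\{u_2,v_1\}$, $O(v_1)=\{u_1,u_2\}$. Your further structural claims are also all verifiably correct: $N^-(u_3)=N^-(u_4)=\{u_1,v_1\}$ and $N^-(u_2)=\{u,v\}$ by diregularity, $O(u_2)=\{u_3,u_4\}$, the identity $N^k(u_1)=\{u,v\}\cup\bigl(T(u_2)\setminus\{u_2\}\bigr)$, the label-flip rule along arcs inside $N^k(u_1)$, and the parity count $1+2E$ odd versus $2^{k-1}$ even that kills the two constant-phase colourings.

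However, the proof is not finished, and the gap is exactly where you flagged it. The alternating colouring survives your parity count, and for that case you offer only a plan (``follow the forced back-arcs from the deepest $u_2$-tree vertices \dots\ and show they must close into a cycle of length $\le k$''), not an argument; since the configuration you are trying to refute would, if it existed, necessarily have that alternating structure, the surviving case carries the entire content of the lemma. The idea you are missing --- the paper's finish --- needs no colouring at all and disposes of every case at once. Take the in-neighbour $u'$ of $u_1$ distinct from $u$ (your $p$). Either $N^+(u')\cap N^+(u)=\{u_1\}$, in which case Lemma \ref{v1 in Ou} applied to the pair $(u,u')$ with unique common out-neighbour $u_1$ gives $u_2\in O(u')$; but $u_2\in O(u_1)\cap O(v_1)$, and in a diregular $(2,k,+2)$-digraph every vertex is an outlier of exactly two vertices, so $u'\in\{u_1,v_1\}$ --- impossible, since $u'\to u_1$ would then be a loop, respectively would contradict $N^+(v_1)=\{u_3,u_4\}$. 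Or else $N^+(u')=N^+(u)$, in which case Lemma \ref{identical neighbourhoods lemma} forces $u'\in O(u)=\{v,v_1\}$, equally impossible. This outlier-counting step (the fact that $u_2$ has already ``used up'' both vertices of which it can be an outlier) is the missing ingredient; without it, or a genuinely completed version of your back-arc analysis, the lemma is not proved.
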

\begin{proof}
Let $N^2(u) = N^2(v)$, with $N^+(u_1) = N^+(v_1) = \{ u_3,u_4\} $.  Suppose that $v \not \in O(u)$.  By Lemma \ref{position of v}, $v \in N^{k-2}(u_3) \cup N^{k-2}(u_4)$.  But then there is a $k$-cycle through $v$.  It follows that $O(u) = \{ v,v_1\}, O(v) = \{ u,u_1\} $.  By Lemma \ref{position of v}, $u_2 \in O(u_1) \cap O(v_1)$.  Therefore by Lemma \ref{identical neighbourhoods lemma} $O(u_1) = \{ u_2,v_1\} , O(v_1) = \{ u_2,u_1\} $.

Consider the in-neighbour $u'$ of $u_1$ that is distinct from $u$.  We have either $|N^+(u') \cap N^+(u)| = 1$ or $|N^+(u') \cap N^+(u)| = 2$.  In the first case, it follows from Lemma \ref{v1 in Ou} that $u_2 \in O(u')$.  Every vertex of $G$ is an outlier of exactly two vertices, so $u' = u_1$ or $v_1$.  In either case, we have a contradiction.  Therefore $N^+(u') = N^+(u)$.  It now follows from Lemma \ref{identical neighbourhoods lemma} that $u' \in O(u) = \{ v,v_1\} $, which is impossible.

\end{proof}
Noticing that $u_1$ and $v_1$ also have a unique common out-neighbour, we have the following corollary.

\begin{corollary}\label{outlier sets of u and u1}
Without loss of generality, $u_3 = v_3, u_9 = v_9, O(u) = \{ v_1,v_4\} , O(v) = \{ u_1,u_4\} , O(u_1) = \{ v_4,v_{10}\} $ and $O(v_1) = \{ u_4,u_{10}\} $.
\end{corollary}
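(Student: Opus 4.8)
The plan is to read the first half of the statement off the dichotomy already in hand, and then to \emph{recurse}: since $u_1$ and $v_1$ again form a pair with a single common out-neighbour, the whole battery of lemmas just proved applies to them verbatim.

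First I would pin down $O(u)$ and $O(v)$. Because $v_2 = u_2$ we have $N^2(u) = \{u_3,u_4,u_5,u_6\}$ and $N^2(v) = \{v_3,v_4,u_5,u_6\}$, each a set of four distinct vertices by $k$-geodecity, so $N^2(u) = N^2(v)$ is equivalent to $\{u_3,u_4\} = \{v_3,v_4\}$, i.e. to $N^+(u_1) = N^+(v_1)$. Lemma \ref{N2u not equal to N2v} therefore forbids $N^+(u_1) = N^+(v_1)$, and the dichotomy established just before that lemma leaves only the alternative that $u_1$ and $v_1$ share exactly one out-neighbour, which I may label $u_3 = v_3$, with one vertex of $N^+(v_1)$ an outlier of $u$. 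Since $v_3 = u_3$ lies at distance $2 \le k$ from $u$ it is not an outlier, so that outlier is $v_4$, giving $v_4 \in O(u)$; combined with $v_1 \in O(u)$ from Lemma \ref{v1 in Ou} this forces $O(u) = \{v_1,v_4\}$. Interchanging the roles of $u$ and $v$ yields $O(v) = \{u_1,u_4\}$.

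Next I would apply exactly the same analysis one level down. The vertices $u_1$ and $v_1$ are distinct and have the single common out-neighbour $u_3 = v_3$, so they form a pair of precisely the type treated above; each of the relevant proofs uses only that the pair has a unique common out-neighbour in $G$, not any special feature of $u,v$, so Lemmas \ref{v1 in Ou} and \ref{N2u not equal to N2v} together with the ensuing dichotomy all transfer to $(u_1,v_1)$. The analogue of Lemma \ref{v1 in Ou} identifies the non-common children as mutual outliers, $v_4 \in O(u_1)$ and $u_4 \in O(v_1)$; the analogue of Lemma \ref{N2u not equal to N2v} gives $N^2(u_1) \neq N^2(v_1)$, hence $N^+(u_4) \neq N^+(v_4)$; and the analogue of the dichotomy then shows that $u_4$ and $v_4$ have a single common out-neighbour, which I may label $u_9 = v_9$, with one vertex of $N^+(v_4)$ an outlier of $u_1$.

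Finally I would name the second outlier of each. As $k \geq 3$, the vertex $u_9 = v_9$ lies at distance $2 \le k$ from $u_1$, so it is reachable and cannot be the outlier; hence $v_{10} \in O(u_1)$, and with $v_4 \in O(u_1)$ I conclude $O(u_1) = \{v_4,v_{10}\}$, the symmetric argument giving $O(v_1) = \{u_4,u_{10}\}$. I expect the main obstacle to be bookkeeping rather than any new idea: one must check that transferring the earlier lemmas to $(u_1,v_1)$ is compatible with the global vertex-labelling convention, and that the two independent ``without loss of generality'' relabellings (choosing the common out-neighbours to be $u_3 = v_3$ and then $u_9 = v_9$) can be made simultaneously without clashing with the choices already fixed for the pair $(u,v)$.
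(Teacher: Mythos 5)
Your proposal is correct and is exactly the paper's argument: the paper derives $u_3=v_3$, $O(u)=\{v_1,v_4\}$, $O(v)=\{u_1,u_4\}$ from the dichotomy together with Lemma \ref{N2u not equal to N2v}, and then justifies the rest with the single remark that ``$u_1$ and $v_1$ also have a unique common out-neighbour,'' i.e.\ precisely the recursion you carry out. Your write-up merely makes explicit the transfer of Lemmas \ref{v1 in Ou} and \ref{N2u not equal to N2v} and the dichotomy to the pair $(u_1,v_1)$, which the paper leaves implicit.
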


We are now in a position to complete the proof by deriving a contradiction.

\begin{theorem}
There are no diregular $(2,k,+2)$-digraphs for $k \geq 3$.
\end{theorem}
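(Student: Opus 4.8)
The plan is to show that the configuration pinned down in Corollary~\ref{outlier sets of u and u1} propagates indefinitely, so that in a finite digraph it must close up into a periodic structure incompatible with $k$-geodecity. Set $a_0 = u$, $b_0 = v$, and, having defined a pair $(a_i,b_i)$ with a single common out-neighbour $c_i$, let $a_{i+1}$ and $b_{i+1}$ be the out-neighbours of $a_i$ and $b_i$ respectively that are distinct from $c_i$. For the base pair, Corollary~\ref{outlier sets of u and u1} identifies $(a_1,b_1) = (u_1,v_1)$ and $(a_2,b_2)=(u_4,v_4)$, and records $O(a_0) = \{ b_1,b_2\}$, $O(b_0) = \{ a_1,a_2\}$, $O(a_1) = \{ b_2,b_3\}$, $O(b_1) = \{ a_2,a_3\}$. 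So the target is the general pattern $O(a_i) = \{ b_{i+1},b_{i+2}\}$, $O(b_i) = \{ a_{i+1},a_{i+2}\}$.

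First I would check that the recursion never breaks down. The pair $(a_1,b_1)$ again has a single common out-neighbour, and the argument of Lemma~\ref{N2u not equal to N2v} applies to it to give $N^2(a_1) \neq N^2(b_1)$, i.e. $u_4$ and $v_4$ share exactly one out-neighbour. Hence the whole chain of lemmas culminating in Corollary~\ref{outlier sets of u and u1} applies verbatim to $(a_1,b_1)$, and then inductively to every $(a_i,b_i)$, yielding the displayed outlier pattern for all $i$. In particular $a_i \to a_{i+1}$ and $b_i \to b_{i+1}$, so the sequences trace directed walks; and since the passage from $(a_i,b_i)$ to $(a_{i+1},b_{i+1})$ is completely determined by $(a_i,b_i)$, finiteness of $G$ forces the sequence of pairs to be eventually periodic, of some period $p$. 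In the periodic range the $a_i$ (and the $b_i$) lie on a directed cycle; as $k$-geodecity forbids cycles of length $\leq k$, this cycle has length $\geq k+1$, so $p \geq k+1$.

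Next I would pin down distances. Since $b_{i+1} \in O(a_i)$ while $b_i \notin O(a_i)$, and $b_i \to b_{i+1}$, we get $d(a_i,b_i) = k$. Moreover the unique $\leq k$-path from $a_i$ to $b_i$ cannot begin with the arc $a_i \to c_i$: appending $b_i \to c_i$ to such a path would close a cycle of length $\leq k$ through $c_i$. Iterating this remark along the $b$-cycle shows the path must run along the $a$-walk, giving $a_{i+k-1} \to b_i$ and hence $b_i \in \{ a_{i+k}, c_{i+k-1}\}$. The case $b_i = c_{i+k-1}$ is dispatched by comparing in-neighbourhoods, which forces either a period at most $k$ (impossible, as $p \geq k+1$) or a coincidence between the two cycles that reduces it to the remaining case. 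In the case $b_i = a_{i+k}$ the symmetric statement $a_i = b_{i+k}$ yields $p \mid 2k$, whence $p = 2k$; the two sequences then collapse to a single directed $2k$-cycle on which $O(a_i) = \{ a_{i+k+1}, a_{i+k+2}\}$, and the common out-neighbours $c_i$ must lie \emph{off} this cycle, since a chord $a_i \to a_{i+k+1}$ would place an outlier of $a_i$ at distance one.

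I expect this last configuration to be the main obstacle. A direct count of outliers and of the vertices reachable within distance $k$ turns out to be perfectly consistent, so the contradiction cannot be purely numerical; it must be geodesic. The idea is that diregularity, together with the demand that each $a_i$ reach every vertex but its two designated outliers within distance $k$, constrains the out-neighbours of the off-cycle vertices $c_i$ so tightly that the shortcuts they provide inevitably create a second $\leq k$-path between two vertices of the $2k$-cycle, violating $k$-geodecity. Making this forced second path explicit — and doing so uniformly for all $k \geq 3$, where the $k=3$ boundary case (cf. Lemma~\ref{scrunching k = 3}) must be treated with care — is where the real work lies.
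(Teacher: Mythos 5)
There is a genuine gap, and you name it yourself: the proposal ends by conceding that the decisive configuration---the directed $2k$-cycle $a_0,\dots,a_{2k-1}$ with $O(a_i)=\{a_{i+k+1},a_{i+k+2}\}$ and off-cycle common out-neighbours $c_i$---has not been refuted, and that making the ``forced second path'' explicit ``is where the real work lies.'' That deferred step \emph{is} the theorem; everything before it is reduction machinery. The machinery itself also has soft spots: in the case $b_i=c_{i+k-1}$, comparing in-neighbourhoods (using diregularity) only yields $b_{i-1}\in\{a_{i+k-1},b_{i+k-1}\}$, and neither branch is carried to a contradiction---$b_{i-1}=b_{i+k-1}$ gives a short period of the $b$-sequence, not of the pair sequence, and $b_{i-1}=a_{i+k-1}$ merely makes the two walks intersect, which does not by itself ``reduce to the remaining case.'' Likewise, eventual periodicity gives closed walks, not cycles, so you would still have to show the $a_i$ within one period are pairwise distinct before speaking of a $2k$-cycle.

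The contradiction you are searching for is in fact available locally, with no iteration or periodicity at all, which is exactly how the paper finishes. From Corollary \ref{outlier sets of u and u1}, since $u,v\notin\{u_1,u_4,v_1,v_4\}$, Lemma \ref{position of v} forces $d(u,v)=d(v,u)=k$; and because $u_3=v_3$, the $k$-geodesic from $u$ to $v$ cannot pass through $u_3$ (appending $v\rightarrow v_1\rightarrow v_3$ would close a $\leq k$-cycle), so $v\in N^{k-2}(u_4)$ and symmetrically $u\in N^{k-2}(v_4)$. For $k\geq 4$ this gives $u,v\notin\{u_{10},v_{10}\}$, hence $u,v\notin O(u_1)\cup O(v_1)$, so $u\in T_k(u_1)\cap T_k(v_1)$; then $u\notin T(u_3)=T(v_3)$ (else $u$ appears twice in $T_k(v_1)$, once under $v_3$ and once under $v_4$), which forces $u\in N^{k-1}(u_4)$. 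Now both $u$ and $v$ are in-neighbours of their common out-neighbour $u_2$, so $u_4$ reaches $u_2$ by two distinct paths of lengths $k$ and $k-1$, violating $k$-geodecity. This three-line argument also kills your $2k$-cycle configuration directly (there $u_4=a_2$, $v=a_k$, $u_2=c_0$). The only escape is $k=3$ with $u=v_{10}$, $v=u_{10}$---which is precisely your cycle configuration for $k=3$---and the paper disposes of it by a short analysis of the second in-neighbours $u'$ of $u_1$ and $v'$ of $v_1$, concluding $O(u)=N^-(u)=\{v_1,v_4\}$, which is absurd since every vertex must reach its in-neighbours' other out-neighbours. So the gap is not a technicality to be patched: the step you postpone is the whole content, and the way to obtain it is to exploit the forced geodesic positions relative to the common out-neighbour $u_2$, not the global periodic structure.
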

\begin{proof}
$u,v \not \in \{ u_1,u_4,v_1,v_4\} $, so by Lemma \ref{position of v} $d(u,v) = d(v,u) = k$.  In fact, $u_3 = v_3$ implies that $v \in N^{k-2}(u_4)$ and $u \in N^{k-2}(v_4)$.  Let $k \geq 4$.  Then $u,v \not \in \{ u_{10},v_{10}\} $, so $u,v \in T_k(u_1) \cap T_k(v_1)$.  If $u \in T(u_3) = T(v_3)$, then $u$ would appear twice in $T_k(v_1)$, so $u \in N^{k-1}(u_4)$.  However, as $u$ and $v$ have a common out-neighbour, this violates $k$-geodecity.   

Finally, suppose that $k = 3$.  The above analysis will hold unless $u = v_{10}$ and $v = u_{10}$.  Let $N^-(u_1) = \{ u,u'\} , N^-(v_1) = \{ v,v'\} $.  It is evident that $v' \not \in \{ v_1,v_4\} $, so that $v' \in T_3(u)$.  As $v \in N^+(u_4)$, we must have $v' \in N^2(u_2)$.  Similarly $u' \in N^2(u_2)$.  Since $u_1$ and $v_1$ have a common out-neighbour, we can assume that $u' \in N^+(u_5)$ and $v' \in N^+(u_6)$.  $v_4$ can be the outlier of only two vertices, namely $u$ and $u_1$, so $v_4 \in N^3(u_2)$ and likewise $u_4 \in N^3(u_2)$.  By $3$-geodecity $v_4 \in N^2(u_5)$ and $u_4 \in N^2(u_6)$.  It follows that $u, v \not \in N^3(u_2)$, so $u \not \in T_3(u_1) \cup T_3(u_2)$.  Hence $O(u) = N^-(u) = \{ v_1,v_4\} $, which again is impossible.

\end{proof}

It is interesting to note that a similar argument can be used to provide an alternative proof of the result of \cite{Sil}.

\end{document}